\newcommand{\p}{\partial}
\newcommand{\R}{\mathbb{R}}
\newcommand{\D}{\slashed{D}}
\newcommand{\dd}{\mathop{}\!\mathrm{d}}
\newcommand{\snabla}{\slashed{\nabla}} 
\newcommand{\cliff}{\mathfrak{m}}
\newcommand{\Abracket}[1]{\left<#1\right>} 
\newcommand{\parenthesis}[1]{\left(#1\right)} 
\newcommand{\braces}[1]{\left\{#1\right\}} 
\newcommand{\eps}{\varepsilon}
\newcommand{\bH}{\mathbb{H}}
\newcommand{\bZ}{\mathbb{Z}}
\DeclareMathOperator{\Cl}{Cl}
\DeclareMathOperator{\dv}{\dd{vol}}
\DeclareMathOperator{\End}{End}
\DeclareMathOperator{\Hess}{Hess}
\DeclareMathOperator{\id}{Id}
\DeclareMathOperator{\Ker}{Ker}
\DeclareMathOperator{\Spect}{Spect}
\DeclareMathOperator{\Vol}{Vol}
\newtheorem{thm}{Theorem}[section]
\newtheorem{lemma}[thm]{Lemma}
\newtheorem{prop}[thm]{Proposition}
\newtheorem{rmk}[]{Remark}
\title{Existence results for a super Toda system}
\author[A. Jevnikar]{Aleks Jevnikar}
\address{Aleks Jevnikar, Department of Mathematics, Computer Science and Physics, University of Udine, Via delle Scienze 206, 33100 Udine, Italy.}
\email{aleks.jevnikar@uniud.it}
\author[R. Wu]{Ruijun Wu}
\address{Ruijun Wu, SISSA, Via Bonomea, 265, 34136 Trieste, Italy}
\email{ruijun.wu@sissa.it}
\begin{document}
\begin{abstract}
 We solve a super Toda system on a closed Riemann surface of genus~$\gamma>1$ and with some particular spin structures. 
 This generalizes the min-max methods and results for super Liouville equations and gives new existence results for super Toda systems. 
\end{abstract}

\maketitle

{\footnotesize
\emph{Keywords}: super Toda systems, existence results, min-max methods.

\medskip

\emph{2010 MSC}: 58J05, 35A01, 58E05, 81Q60.}

\

\section{Introduction}

In the present work we are concerned with the existence of solutions of a super Toda system. 
This topic is motivated by the recent blowup analysis on super Toda system on Riemann surfaces~\cite{jost2019superToda}, and from the physics side by~\cite{olshanetsky1983susy} and more recently by~\cite{alfaro2010multi}. 
Briefly speaking, the equations on dilaton gravity and gauge theory take the form of Toda equations, and their supersymmetric extensions in quantum field theory takes the form of super Toda systems. 
The physical literature emphasizes the Lie algebraic structure of the equations and special solution in flat spacetime, while here we treat it variationally and try to obtain some new existence results of nontrivial classical solutions with curved metrics. 
The system has a variational structure with a strongly indefinite functional, and we locate it on a general Riemann surface with a restriction of genus greater than one. 

\medskip

To formulate our result, let us start from a variational viewpoint.
We will not restrict ourselves to the physics-relevant applications and hence we omit the physics dimensional constants to make the mathematical structure more clear.
Moreover, the Lie group of the theory will be~$SU(N+1)$. However, the result holds for a much wider class of problems, see the discussion after Theorem~\ref{thm:existence}. For simplicity we will restrict to the case~$N=2$, since the case for general~$N\ge 2$ is similar but only notationally heavy.
Note that for~$N=1$ this reduces to the super Liouville equations, which was considered in~\cite{jevnikar2020existence,jevnikar2021existence} and shows some new features in variational problems involving Dirac operators.

For what concerns spin geometry and the Dirac operator we refer to Section 2 and the references therein. Let~$(M,g)$ be a Riemann surface equipped with a smooth Riemannian metric. 
With a given spin structure, let~$\Sigma_g M$ denote the associated spinor bundle. 
A field in the model is described by a tuple~$(\mathbf{u},\bm{\psi})$, where
\begin{align}
 \mathbf{u}=&(u_1,\cdots, u_N), \quad \mbox{ with } u_j\in C^\infty(M),\; (1\le j\le N) \\
 \bm{\psi}=&(\psi_1,\cdots,\psi_N), \quad \mbox{ with } \psi_j\in \Gamma(\Sigma_g M), \; (1\le j\le N). 
\end{align}
Were the spinors anti-commuting variables, there would be a supersymmetry transformation between the bosonic fields~$\bm{u}$ and the fermionic fields~$\bm{\psi}$. 
The analysis for anti-commuting variables are far from available and thus we have to take the spinors as commuting variables, in which case we can analyze the model variationally, still maintaining most of the properties. 
The super Toda system under consideration takes the form: for~$1\le j\le N$,
\begin{align}
 \Delta u_j =&\sum a_{jk}\parenthesis{e^{2u_j}- e^{u_j}|\psi_j|^2} - K_g, \\
 \D\psi_j= & -e^{u_j}\psi_j, 
\end{align}
where~$A=(a_{jk})$ is the Cartan matrix for~$SU(N+1)$, that is
\begin{align}
 A=(a_{jk})
 =\begin{pmatrix}
   2 & -1 & 0 & \cdots & \cdots & 0& 0 \\
   -1 & 2 & -1& \cdots & \cdots & 0& 0 \\
   0 & -1 & 2 & \cdots & \cdots & 0& 0 \\
   \vdots & \vdots & \vdots &  &  & \vdots  &\vdots \\
   0 & 0 & 0& \cdots & \cdots & -1 & 2
  \end{pmatrix},
\end{align}
$K_j$ is the Gaussian curvature of $M$ and $\D$ is the Dirac operator acting on spinors $\psi_j$. 

Observe that there is a different sign convention with respect to the one considered in~\cite{jost2019superToda}. If we take into the consideration the topology and the curvature of the underlying surface, which we have assumed to have genus bigger than one, we are naturally led into the above system. This is consistent with the case~$N=1$, i.e. super Liouville  equations, and the prescribed curvature problem which is in turn tight with the Gauss--Bonnet formula, see the discussion in~\cite{jevnikar2020existence}. The case of genus one or zero should be formulated with a different notation.

Since the Dirac operator has an unbounded symmetric spectrum~\cite{ginoux2009dirac}, the system is strongly indefinite, and the indefinite part is more coupled in the system case; this is the main difficulty that we have to deal with. 

\medskip

From now on, we will restrict to the case~$N=2$ to simplify the notation, hence for us:
\begin{align}
 A= \begin{pmatrix}
     2 & -1 \\
     -1 & 2
    \end{pmatrix}
\end{align}
and the configuration space is the Hilbert space
\begin{align}
 H\equiv (H^1(M))^2 \times \parenthesis{H^{\frac{1}{2}}(\Sigma_g M)}^2
\end{align}
consisting of tuples~$(\bm{u},\bm{\psi})$ with Sobolev regularity, see Section 2 for more details. 
With a positive parameter~$\rho\in \R_+$, we consider the action functional
\begin{align}
 J_\rho\colon H\to\R
\end{align}
defined by 
\begin{align}\label{eq:functional}
 J_\rho(\bm{u},\bm{\psi}) \coloneqq 
 &\int_M \frac{1}{3}\parenthesis{|\nabla u_1|^2 +|\nabla u_2|^2+\Abracket{\nabla u_1, \nabla u_2}} +2 K_g (u_1 + u_2) + (e^{2u_1}+ e^{2u_2}) \\
  & \qquad\qquad 
       + \Abracket{\D\psi_1- \rho e^{u_1}\psi_1,\psi_1}
          +\Abracket{\D\psi_2-\rho e^{u_2}\psi_2,\psi_2} \dv_g
        +\int_M 2K_g \dv_g. 
\end{align}
The last additive constant is just to ensure~$J_\rho(0,0)=0$, without any affection to the variational structure. 
This is a smooth functional, and the Euler--Lagrange equations are given by 
\begin{equation}\label{eq:super Toda-1} \tag{sT}
 \begin{cases}
  \frac{1}{3}(-2\Delta u_1- \Delta u_2)+ 2K_g + 2 e^{2u_1} -\rho e^{u_1} |\psi_1|^2 =0, \vspace{0.2cm}\\
  \frac{1}{3}(-\Delta u_1-2\Delta u_2) + 2K_g + 2 e^{2u_2} -\rho e^{u_2} |\psi_2|^2 =0, \vspace{0.2cm}\\
  \D\psi_1 = \rho e^{u_1} \psi_1, \vspace{0.2cm}\\
  \D\psi_2 = \rho e^{u_2} \psi_2, 
 \end{cases}
\end{equation}
which is equivalent to
\begin{align}\label{eq:super Toda-2} \tag{$sT'$}
 \begin{cases}
  \Delta u_1=2\parenthesis{2e^{2u_1}- e^{2u_2} }   
    -\rho\parenthesis{2e^{u_1}|\psi_1|^2 - e^{u_2}|\psi_2|^2 }+ 2K_g,\vspace{0.2cm}\\ 
  \Delta u_2=2\parenthesis{2 e^{2u_2}-e^{2u_1}} 
   -\rho\parenthesis{2 e^{u_2}|\psi_2|^2-e^{u_1}|\psi_1|^2}+ 2K_g, \vspace{0.2cm}\\
   \D\psi_j = \rho e^{u_j}\psi_j, \qquad \qquad j=1,2.
 \end{cases}
\end{align}
These are known as the super Toda system which we aim to solve.
The system is conformal invariant, i.e.: for a conformal metric~$\widetilde{g}= e^{2w}g$ with~$w\in C^\infty(M)$, let
\begin{align}
 &\widetilde{u}_j\coloneqq u_j -w, \\
 &\widetilde{\psi}_j\coloneqq e^{-\frac{w}{2}}\beta(\psi_j), \quad j=1,2
\end{align}
where~$\beta\colon \Sigma_g M \to \Sigma_{\widetilde{g}} M$ is the induced isometric isomorphism between the spinor bundles associated to the different metrics~$g$ and~$\widetilde{g}$ respectively~\cite{ginoux2009dirac, jost2018symmetries}, then 
\begin{align}
 &K_{\widetilde{g}}= e^{-2w}(K_g- \Delta_g w), \ \
 &\Delta_{\widetilde{g}} u_j= e^{-2w} \Delta_{g} u_j, \quad j=1,2
\end{align}
and 
\begin{align}
 \D_{\widetilde{g}} \widetilde{\psi}_j = e^{-\frac{3}{2}w} \beta(\D\psi_j) = \rho e^{\widetilde{u}_j}\widetilde{\psi}_j, \qquad j=1,2.
\end{align}
As a consequence,~$(\widetilde{\bm{u}}, \widetilde{\bm{\psi}})$ satisfies the system of the same form as~\eqref{eq:super Toda-2} with respect to the metric~$\widetilde{g}$. 
Thus, without loss of generality, we can assume that we start with a metric~$g$ with constant Gaussian curvature~$K_g\equiv -1$. Then,
note that there is a trivial solution:~$\bm{u}=(0,0)$,~$\bm{\psi}=(0,0)$. 

\medskip

If one neglects supersymmetry and physics relevance, there could be other ways to modify the super Toda equations to locate it on a closed surface. 
For instance, one can consider equations of mean field type, see~\cite{malchiodi2007some} and the references therein. 
However, the form here is better suited for the inclusion of spinors. 

\begin{rmk}
In vectorial form,~\eqref{eq:super Toda-2} can also be written as 
\begin{equation}
 \begin{cases}
  \Delta \begin{pmatrix}u_1 \\ u_2 \end{pmatrix}
  = & 2\begin{pmatrix} 2 & -1 \\ -1 &2 \end{pmatrix}
      \begin{pmatrix} e^{2u_1}\\ e^{2u_2} \end{pmatrix}
      + 2K_g\begin{pmatrix} 1 \\ 1\end{pmatrix} 
      -\rho\begin{pmatrix} 2 & -1 \\ -1 & 2\end{pmatrix}
      \begin{pmatrix} e^{u_1}|\psi_1|^2 \\ e^{u_2}|\psi_2|^2 \end{pmatrix},  \\
  \D\begin{pmatrix}\psi_1 \\ \psi_2 \end{pmatrix}
  =& \rho \begin{pmatrix} e^{u_1} & 0 \\ 0 & e^{u_2} \end{pmatrix}
       \begin{pmatrix} \psi_1 \\ \psi_2 \end{pmatrix}. 
 \end{cases}
\end{equation}
This can be somehow viewed as 
\begin{equation}
 \begin{cases}
  \Delta\bm{u} = 2A e^{2\bm{u}} +2 K_g -\rho A e^{\bm{u}} |\bm{\psi}|^2, \\
  \D\bm{\psi}= \rho e^{\bm{u}} \bm{\psi}
 \end{cases}
\end{equation}
and can be regarded as a generalization of the super Liouville system, see~\cite{jevnikar2020existence}. 
 
\end{rmk}

\medskip

In the present work we will use minimax method to find a nonzero solution on closed Riemann surface~$M$ with~$genus(M)=\gamma>1$. 
The basic idea is to locate the functional on a Nehari type manifold with infinite codimension, where the functional has only finite index at the trivial critical point~$(\bm{0}, \bm{0})\in H$. 
A similar argument was applied to the Dirac-Einstein system~\cite{maalaoui2019characterization} and the super Liouville problem~\cite{jevnikar2020existence}, noting that the latter is actually a special case of the super Toda system. 
Here we extend the methodology to systems, which involves some technical nontriviality. In particular, some extra care is devoted to the study of the Palais-Smale condition in order to control the coupling effect of the different fields in play.

\medskip

We call~$\rho\in\R_+$ \emph{exceptional} if~$\rho\in \Spect(\D_g)$, and we say that a solution~$(\bm{u},\bm{\psi})$ is nontrivial if it is nonzero. 
Note that if~$u_1$ and~$u_2$ are constant functions, then~$\psi_1$ and~$\psi_2$ are eigenspinors of constant length, which is impossible by Efimov theorem since we assume~$K_g\equiv -1$, see~\cite{efimov1968hyperbolic, jevnikar2021sinhgordon,  milnor1972efimov}. 

\begin{thm}\label{thm:existence}
 Let~$(M,g)$ be a closed surface with genus~$\gamma>1$. 
 Then, there exists a spin structure such that~\eqref{eq:super Toda-1} is solvable for any~$\rho$ non-exceptional.  
\end{thm}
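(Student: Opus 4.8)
The plan is to adapt the min-max scheme used for super Liouville equations in \cite{jevnikar2020existence} to the coupled system \eqref{eq:super Toda-1}, handling the extra indefiniteness coming from the two Dirac fields. The first step is functional-analytic: split each spinor space $H^{1/2}(\Sigma_g M)$ according to the spectral decomposition of $\D_g$ into positive/negative (and, for $\rho$ non-exceptional, there is no kernel obstruction at the level $\rho$), writing $\psi_j = \psi_j^+ + \psi_j^-$. One then looks for critical points of $J_\rho$ on a Nehari-type constraint $\mathcal{N}$ obtained by imposing, for each $j$, the stationarity of $J_\rho$ in the directions $\psi_j^+$ and $\psi_j^-$ — equivalently, solving $\psi_j^\pm$ as functions of $(\bm{u},\psi_j^\mp$-data$)$ via a fixed-point/implicit-function argument that uses precisely the spectral gap guaranteed by $\rho\notin\Spect(\D_g)$. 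On $\mathcal{N}$ the reduced functional $I_\rho(\bm u) := J_\rho(\bm u,\bm\psi(\bm u))$ is shown to have a mountain-pass geometry near $(\bm 0,\bm 0)$: the quadratic form controlling the bosonic part, governed by the (positive definite, since $A$ is positive definite) Cartan matrix together with the term $\int 2K_g(u_1+u_2)$ and the exponentials, is coercive enough to give a local minimum structure at the origin that is \emph{not} global because $e^{2u_j}$ and the linear curvature term push $I_\rho$ down along suitable rays (this is where $K_g\equiv-1$ and $\gamma>1$ enter — the linear term has the correct sign to destabilize the origin).

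The second step is to verify the Palais--Smale condition for $J_\rho$ restricted to $\mathcal{N}$. This is singled out in the introduction as the technically delicate point: along a PS sequence one must first get an $H^1$ bound on $\bm u$ (testing the $u_j$-equations, using Moser--Trudinger to control $\int e^{2u_j}$ and $\int e^{u_j}|\psi_j|^2$ in terms of the Dirichlet energy, and exploiting the positivity of $A$ to close the estimate for the system rather than a single equation), then bootstrap to an $H^{1/2}$ bound on $\bm\psi$ from the Dirac equations $\D\psi_j=\rho e^{u_j}\psi_j$ together with the Nehari relations, and finally upgrade weak to strong convergence using the compactness of the Sobolev embeddings $H^1\hookrightarrow L^p$ and $H^{1/2}\hookrightarrow L^q$ on the closed surface $M$. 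The coupling term $e^{u_2}|\psi_2|^2$ appearing in the $u_1$-equation (and vice versa) is what forces ``extra care'': the a priori estimates for the two components must be run simultaneously, and the sign pattern of $A$ is used to prevent one component's blow-up from feeding the other's.

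The third step is the topological mechanism producing a \emph{nonzero} critical point. Having a local min at the origin, a PS condition, and the failure of global minimality, one applies a linking/mountain-pass theorem for strongly indefinite functionals — this is why the Nehari reduction is performed first: it collapses the infinite-dimensional indefiniteness of the Dirac part so that on $\mathcal{N}$ the functional has only finite Morse index at $(\bm 0,\bm 0)$, and a standard min-max over paths (or a suitable sphere) in the $\bm u$-variables yields a critical value $c>0=J_\rho(\bm 0,\bm 0)$, hence a nontrivial solution. To make the local-min geometry genuinely work one needs a favourable spin structure: for a suitable choice the nonzero harmonic spinors / the lowest positive eigenvalue of $\D_g$ can be arranged so that the quadratic part of $J_\rho$ in the spinor directions is nondegenerate and the constraint manifold is well-defined and smooth — this is the role of the phrase ``there exists a spin structure'' in the statement, and it is chosen exactly as in the super Liouville case to avoid the half-integer weight obstructions. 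Finally one invokes the Efimov-type remark recorded before the theorem to conclude that the solution found, being nonzero, is genuinely nontrivial (the constants-plus-eigenspinor configuration is excluded since $K_g\equiv-1$).

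The main obstacle I expect is the Palais--Smale analysis of the coupled bosonic equations: unlike the scalar super Liouville case, one cannot estimate $u_1$ and $u_2$ independently because of the off-diagonal $-e^{u_k}|\psi_k|^2$ and $-e^{2u_k}$ terms, so the Moser--Trudinger argument has to be vectorized and combined with the positive-definiteness of $A$ in just the right way to obtain a closed a priori bound; getting the constants to line up so that neither component escapes is the crux.
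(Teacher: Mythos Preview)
Your outline has two genuine gaps that would block the argument as written.

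\medskip

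\textbf{The Nehari reduction.} You propose to impose stationarity of $J_\rho$ in \emph{both} directions $\psi_j^+$ and $\psi_j^-$ and thereby reduce to a functional $I_\rho(\bm u)$ of the bosonic fields alone. But stationarity in all spinor directions is exactly the Dirac equation $\D\psi_j=\rho e^{u_j}\psi_j$, which is linear and homogeneous; for generic $u_j$ it forces $\psi_j=0$, and then $I_\rho(\bm u)=F(\bm u)$ has only the trivial critical point. The constraint actually used (here and in \cite{jevnikar2020existence}) is one-sided: one imposes only
\[
P^-(1+|\D|)^{-1}\bigl(\D\psi_j-\rho e^{u_j}\psi_j\bigr)=0,\qquad j=1,2,
\]
so that the negative spinor component $\psi_j^-$ is enslaved to $(\bm u,\psi_j^+)$ while the positive component remains a free variable on the Nehari manifold $N_\rho$. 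The reduced problem is therefore still posed on an infinite-dimensional space containing the spinor directions $\psi_j^+$, not on $(H^1)^2$ alone.

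\medskip

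\textbf{The min-max geometry.} Your description of the linking is inverted. With $K_g\equiv -1$ the bosonic part rewrites as
\[
F(\bm u)=\int_M \tfrac{1}{3}\bigl(|\nabla u_1|^2+|\nabla u_2|^2+\langle\nabla u_1,\nabla u_2\rangle\bigr)
+\bigl(e^{2u_1}-1-2u_1\bigr)+\bigl(e^{2u_2}-1-2u_2\bigr)\,\dv_g,
\]
which is globally nonnegative and vanishes only at $\bm u=0$; the curvature/exponential terms do not ``push $I_\rho$ down along rays'' in $\bm u$. The origin is a \emph{global} minimum of $F$. The indefiniteness of $J_\rho|_{N_\rho}$ at $(\bm 0,\bm 0)$ comes entirely from the spinor side: the Hessian is negative precisely on the finite-dimensional space
\[
\mathscr N_\rho=\{\bm 0\}\times \bigl(H^{1/2,+}_b(\Sigma M)\bigr)^2,
\]
spanned by eigenspinors with eigenvalues in $(0,\rho)$. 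The linking is then built around this subspace (a cone $\mathcal C_\tau(\mathscr N_\rho)$ and a cylinder $\mathcal D$ with $\partial\mathcal D=L_2$ winding around it), not around a mountain-pass in the $\bm u$-variables. The genus hypothesis $\gamma>1$ enters not to destabilize the origin but to make $F$ coercive and, crucially, to control the averages $\bar u_j^n$ in the Palais--Smale argument via Gauss--Bonnet and Jensen. Likewise, the role of the chosen spin structure is concretely $\Ker\D=\{0\}$, which is used in the PS estimates to bound the Lagrange multipliers $\varphi_j^n$ and the spinors, not merely to make the constraint manifold smooth.
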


Actually, in general there exist many spin structures satisfying the above result. As it will be pointed out in the proof, one just needs  a spin structure with~$\Ker\D=\{0\}$, for which we refer to~\cite{bar1992harmonic, bures1994harmonic} or Section 2 in~\cite{jevnikar2020existence}.

\medskip

Besides the known solutions in flat domains, this seems to be the first existence result of nontrivial solutions on a general closed surface.
Once there is a solution with nonzero spinors, there corresponds a 3-dimensional family of solutions because of the quaternionic structure on the spinor bundle (we are taking the Clifford algebra~$\Cl(2,0)$). 
Solutions in this 3-dimensional family are geometrically equivalent. 
We do expect that there exist other geometrically inequivalent solutions. 

\medskip

The method is much more robust than this. First, one can consider the general case~$N	\geq2$. Second, we can allow the coefficient matrix to be diagonally dominant and thus different Lie algebras can be treated other than~$SU(N+1)$. 
Moreover, we can consider~$\bm{\rho}=(\rho_1,\cdots, \rho_N)$ instead of a single parameter~$\rho$, as long as each of~$\rho_j$ is positive.   
Finally, if we start with a surface with constant Gaussian curvature (say~$K_g\equiv -1$), then we may even give up the requirement of conformal invariance, though the conformal invariance is relevant in physics.
The potential of exponential type, on one hand, does not go to infinity in~$L^p$ sense for~$p<\infty$ thanks to Moser-Trudinger inequality in dimension two; on the other hand, it may allow~$u$ to decrease to~$-\infty$ which will get out of control. 
This is then reflected mainly in the study of the Palais-Smale condition. 
Indeed, when the potential is replaced by a coercive one, for example as in the sinh-Gordon case, we could get better estimate in the spirit of~\cite{jevnikar2021sinhgordon} and hence we can allow the Dirac operator to have nontrivial kernel, that is, we can allow arbitrary spin structures.

\medskip

The paper is organized as follows.
Section 2 contains some brief preliminaries about the spectral properties of the Dirac operator and the fractional Sobolev spaces of spinors as well as a quick recall of Moser--Trudinger embedding in 2D. 
In Section 3 we introduce a suitable Nehari manifold, on which is constrained functional is shown to satisfy the Palais--Smale condition. 
This enables us to search for solutions of saddle type using min-max method, which is done in Section 4.

\

\section{Preliminaries}

Here we collect some basic facts concerning the spectral properties of the Dirac operator, the fractional Sobolev space of spinors, and the Moser-Trudinger embeddings of functions. 
Since these materials are rather well-known, we will be sketchy, and the reader can refer to~\cite{aubin1998some, ginoux2009dirac, jevnikar2020existence, jost2011riemannian, lawson1989spin} for more information. 

\subsection{Spectral properties of Dirac operators on surfaces}
Let~$M$ be a closed Riemann surface of genus~$\gamma>1$ and let~$g$ be a Riemannian metric in the given conformal class. 
Choose a spin structure on~$M$ and let~$\Sigma_g M$ (abbreviated as~$\Sigma M$) be the associated spinor bundle, with fiber~$\bH\cong\R^4$. 
The spinor bundle is equipped with a fiberwise real inner product~$\Abracket{\cdot,\cdot}$, a canonical spin connection~$\snabla$, and a Clifford map~$\cliff\colon TM\to \End(\Sigma M)$ satisfying the Clifford relation 
\begin{align}
 \cliff(X)\cliff(Y)+\cliff(Y)\cliff(X)= - 2g(X,Y), \qquad \forall X, Y \in \Gamma(TM). 
\end{align}
Up to the identification~$T^*M \cong TM$ by the Riemannian metric, we can view the Clifford map as~$\cliff\colon T^*M\otimes \Sigma \to \Sigma M$. 
Then the Dirac operator on~$\Sigma M$ is defined as
\begin{align}
 \D\coloneqq \cliff \circ \snabla \colon  \Gamma(\Sigma M) \to \Gamma(\Sigma M). 
\end{align}
It is an essentially self-adjoint operator which is elliptic on the closed surface~$M$, hence has a real spectrum consisting of eigenvalues, and the eigenvalues are unbounded in both directions.
More precisely, let~$\lambda_j$, $j\in \bZ_*=\bZ\setminus \{0 \}$ be the nonzero eigenvalues counted with multiplicities and in a non-decreasing order, then
\begin{align}
  -\infty \leftarrow\cdots\le \lambda_{-k-1}\le \lambda_{-k}\le\cdots\le \lambda_{-1}\le 0 
 \le \lambda_1\le \cdots \le \lambda_k \le \lambda_{k+1}\le \cdots \to +\infty;
\end{align}
the Dirac operator may also have kernels, necessarily finite dimensional. 
From now on we assume that the spin structure is chosen such that~$\Ker\D=\{0\}$, which is always possible, see~\cite{bar1992harmonic, bures1994harmonic} or Section 2 in~\cite{jevnikar2020existence}. 
We denote the eigenspinors by~$\Psi_j$,~$j\in \bZ_*$ which form a complete ~$L^2-$orthonormal basis for the~$L^2$ spinors.
Note that the eigenspinors are smooth by classical elliptic regularity theory. 

\medskip

There are two types of special structures on the spinor bundle: the volume element endomorphism and the quaternionic structures. 
The volume element endomorphism, or known as the chirality operator, is~$\omega=\cliff(\dv_g)$ where we have extended~$\cliff$ to the exterior differential forms. 
In terms of an oriented locally~$g$-orthonormal frame~$(e_1, e_2)$, \begin{align}
 \omega=\cliff(e_1)\cliff(e_2)\in \End(\Sigma M), & & 
 \mbox{ such that } \quad \omega^2= -\id.
\end{align}
That is,~$\omega$ defines an almost complex structure on~$\Sigma M$. 
Another family of almost complex structures on~$\Sigma M$ come from the quaternionic structure of the spinor bundle, which form a three-dimensional family~$\mathcal{J}$ such that for each~$\mathbf{j}\in \mathcal{J}$, 
\begin{align}
 \mathbf{j}^2=-\id, & & 
 \D(\mathbf{j}(\psi))= \mathbf{j}(\D\psi). 
\end{align}
Therefore, each eigenspace of~$\D$ admits a quaternionic structrue and has real dimension a multiple of four. 
In contrast, the volume element anti-commute with the Dirac operator
\begin{align}
 \D(\omega\cdot \psi)= -\omega\cdot\D\psi
\end{align}
which implies that the spectrum~$\Spect(\D)$ is symmetric with respect to the origin:~$\lambda_{-k}= -\lambda_k$. 

\medskip

\subsection{The fractional Sobolev space for spinors}
We take the definitions and basic properties of fractional Dirac operators, and fractional Sobolev spaces for spinors, from~\cite{ammann2003habil}, and will mostly use the notation from~\cite{jevnikar2021sinhgordon}. 

\medskip

A spinor~$\psi\in L^2(\Sigma M)$ is expressed in terms of the orthonormal basis~$\{\Psi_k\}_{k\in \mathbb{Z}_*}$ as 
\begin{align}
 \psi= \sum_{j\in \bZ_*} a_j \Psi_j,
    & & 
    \mbox{ with }
    \quad 
    \|\psi\|_{L^2}^2 = \sum_j |a_j|^2. 
\end{align}
For~$\psi \in C^1$ (or in~$H^1=W^{1,2}$), we can differentiate term by term and get
\begin{align}
 \D\psi =\sum_{j\in\bZ_*} a_j \lambda_j \Psi_j. 
\end{align}
Then, by the~$L^2$ orthonormality of the basis, 
\begin{align}
 \|\D\psi\|_{L^2}^2 =\sum_j |a_j|^2 \lambda_j^2, 
\end{align}
as long as the right hand side is finite. 
The fractional Dirac operator on spinors and the fractional Sobolev spaces for spinors are defined in a similar way.
That is, for any~$s>0$, define~$|\D|^s\colon \Gamma(\Sigma M)\to \Gamma(\Sigma M)$ by 
\begin{align}
 |\D|^s \psi 
 = \sum_{j\in \bZ_*} |\lambda_j|^s a_j\Psi_j, 
\end{align}
and the space of~$H^s$ spinors is
\begin{align}
 H^s(\Sigma M)\coloneqq 
 \braces{ \psi\in L^2(\Sigma M) \mid 
  \Abracket{\psi,\psi}_{H^s}<\infty}. 
\end{align}
where 
\begin{align}
 \Abracket{\psi,\phi}_{H^s} \coloneqq 
 \Abracket{\psi,\phi}_{L^2} + \Abracket{|\D|^s\psi, |\D|^s\phi}_{L^2}
\end{align}
is an inner product, making~$H^s(\Sigma M)$ a Hilbert space. 
For~$s<0$ we define~$H^s(\Sigma M)$ as the dual space of~$H^{-s}(\Sigma M)$ as usual. 
There is no confusion of~$H^k(\Sigma M)$ for~$k\in \mathbb{N}$ since in this case the above Sobolev space coincides with the classical Sobolev space of spinor $W^{s,2}(\Sigma M)$ defined via covariant derivatives, though~$|\D|^k\neq \D^k$ (since they differ on the eigenspinors associated to negative eigenvalues by a sign).  
The spinorial version of Sobolev embeddings have the same form as for functions. In particular for~$s\in (0,1)$, the space~$H^s(\Sigma M)$ continuously embeds into~$L^q(\Sigma M)$ for~$1\le q\le\frac{2}{1-s}$ and compactly embeds in~$L^q(\Sigma M)$ for~$1\le q<\frac{2}{1-s}$. 
What will be concerned is the case~$s=\frac{1}{2}$, for which we have the continuous embedding
\begin{align}
 H^{\frac{1}{2}}(\Sigma M)\hookrightarrow L^4(\Sigma M). 
\end{align}
This is a suitable space of Sobolev spinors to work with in the variational setting. 

\medskip

According to the signs of the eigenvalues, we have the following decomposition
\begin{align}\label{eq:splitting of spinor space-I}
 H^{\frac{1}{2}}(\Sigma M) = H^{\frac{1}{2},+}(\Sigma M) \oplus H^{\frac{1}{2},-}(\Sigma M),
 \qquad \psi= \psi^+ +\psi^-,
\end{align}
where~$H^{1/2,\pm}(\Sigma M)$ denotes the closure of the subspaces spanned by eigenspinors corresponding to positive resp. negative eigenvalues. 
With an abuse of terminology, we will call~$\psi^+$ resp.~$\psi^-$ the positive resp. negative components of the spinor~$\psi$. 
Let~$P^\pm$ denote the projection operator:~$P^\pm\psi=\psi^{\pm}$. 
Given a parameter~$\rho \notin\Spect(\D)$ with~$\rho>0$, for later convenience we further split the space~$H^{1/2,+}(\Sigma M)$ into 
\begin{align}
 H^{\frac{1}{2},+}(\Sigma M)
 = H^{\frac{1}{2},+}_a(\Sigma M)\oplus H^{\frac{1}{2},+}_b(\Sigma M) 
\end{align}
with~$H^{1/2,+}_a(\Sigma M)$ resp.~$H^{1/2,+}_b(\Sigma M)$ being the~$H^{1/2}$-closure of the subspaces spanned by eigenspinors corresponding to positive eigenvalues ``above'' respectively ``below''~$\rho$. 
Therefore,~\eqref{eq:splitting of spinor space-I} can be refined into
\begin{align}\label{eq:splitting of spinor space-II}
 H^{\frac{1}{2}}(\Sigma M)
 = H^{\frac{1}{2},+}_a(\Sigma M) 
 \oplus H^{\frac{1}{2},+}_b (\Sigma M)
 \oplus H^{\frac{1}{2},-}(\Sigma M),
 & & \mbox{ and } \quad \psi= \psi^+_a +\psi^+_b +\psi^-. 
\end{align}

\medskip

\subsection{Morser-Trudinger embeddings in 2D}
The suitable space of functions here is the classical Sobolev space~$H^1(M)=W^{1,2}(M,\R)$, which comes with the decomposition
\begin{align}
 H^1(M)=\R\oplus H^1_0(M), \qquad u= \bar{u}+ \widehat{u}
\end{align}
where~$\bar{u}=\fint_M u\dv_g $ is the average of~$u$, and~$\widehat{u}$ denotes the oscillating part with zero average. 
By Poincar\'e's inequality,~$\|\nabla \widehat{u}\|_{L^2}$ defines a norm equivalent to~$\|\widehat{u}\|_{H^1}$ on~$H^1_0(M)$, and 
\begin{equation}
 |\bar{u}|+\|\nabla\widehat{u}\|_{L^2}
\end{equation}
is a norm equivalent to~$\|u\|_{H^1}$. 
The Sobolev embedding theorems imply that for any~$p<\infty$, ~$H^1(M)$ embeds into~$L^p(M)$ continuously and compactly. 
Furthermore, the Moser--Trudinger inequality implies that~$e^{u}$ is~$L^p$ integrable for any~$p\in [1,+\infty)$ and the mapping
\begin{equation}
 H^1(M)\ni u\mapsto e^u\in L^1(M)
\end{equation}
is compact (see e.g.~\cite[Theorem 2.46]{aubin1998some}). 
Consequently the mappings~$H^1(M)\ni u\mapsto e^u\in L^p(M)$ are compact for all~$p \ge 1$.  
This will be crucial for the compactness of the problem, namely this compactness of Moser-Trudinger embedding essentially guarantees that validity of the Palais--Smale condition, as will be clear later. 

\

\section{A Nehari manifold and Palais-Smale conditions}

Since the functional~$J_\rho$ on the Hilbert space~$H$ is strongly indefinite, and the nonlinearity is exponential in the negative direction, it is desirable to restrict it to a nice Nehari-type submanifold of~$H$, so that we can get better control in the negative part of the functional, as in~\cite{szulkin2009ground, szulkin2010themethod}. 
This can be done in the spirit of~\cite{jevnikar2020existence,jevnikar2021sinhgordon}. 
Namely, consider the map
\begin{align}
 G=(G_1, G_2)\colon \parenthesis{H^1(M)}^2 \times \parenthesis{H^{\frac{1}{2}}(\Sigma M)} \to \parenthesis{H^{\frac{1}{2},-}(\Sigma M)}^2 
\end{align}
given by 
\begin{align}
  G_j(u_j,\psi_j)\coloneqq 
 P^-(1+|\D|)^{-1}(\psi_j -\rho e^{u_j}\psi_j)=0, \; j=1,2,
\end{align}
and define 
\begin{align}
 N_\rho\coloneqq G^{-1}(0,0) 
 =\biggr\{(\bm{u},\bm{\psi})\in H \mid  G_j(u_j,\psi_j)
 =P^-(1+|\D|)^{-1}(\psi_j -\rho e^{u_j}\psi_j)=0, \; j=1,2\biggr\}. 
\end{align}

Then, we have
\begin{lemma}
 $N_\rho$ is a submanifold of~$H$. 
\end{lemma}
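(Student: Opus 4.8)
The plan is to show that $(0,0)$ is a regular value of the map $G = (G_1,G_2)$, so that $N_\rho = G^{-1}(0,0)$ is a submanifold by the implicit function theorem in Banach spaces. Since $G$ splits as a product $G_1 \times G_2$ acting on the independent pairs $(u_1,\psi_1)$ and $(u_2,\psi_2)$, it suffices to treat each $G_j$ separately: I would prove that $G_j \colon H^1(M) \times H^{1/2}(\Sigma M) \to H^{1/2,-}(\Sigma M)$ is a smooth submersion, i.e. that its differential is surjective (with splitting kernel) at every point of $G_j^{-1}(0)$. Smoothness of $G_j$ follows from the smoothness of $u \mapsto e^u$ as a map $H^1(M) \to L^p(M)$ (Moser--Trudinger), the continuous embedding $H^{1/2}(\Sigma M) \hookrightarrow L^4(\Sigma M)$, Hölder's inequality to see that $(u,\psi)\mapsto e^u\psi$ is a smooth map into $L^{2}$ hence into $H^{-1/2}(\Sigma M)$, and the boundedness of $P^-(1+|\D|)^{-1}$ as an operator $H^{-1/2}(\Sigma M) \to H^{1/2,-}(\Sigma M)$.

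The key computation is the partial differential of $G_j$ in the $\psi_j$ direction. At a point $(u_j,\psi_j)$ one finds
\begin{align}
 D_{\psi_j} G_j(u_j,\psi_j)[\phi] = P^-(1+|\D|)^{-1}\parenthesis{\phi - \rho e^{u_j}\phi}.
\end{align}
Restricting the test spinor to $\phi \in H^{1/2,-}(\Sigma M)$, write $\phi = (1+|\D|)^{-1}\eta$ for the corresponding $\eta \in H^{-1/2,-}(\Sigma M)$; then the map becomes $\eta \mapsto \eta - \rho\, P^-(1+|\D|)^{-1}(e^{u_j}(1+|\D|)^{-1}\eta)$, which is the identity plus a compact operator on $H^{-1/2,-}(\Sigma M)$ (compactness comes from the compact Moser--Trudinger multiplication by $e^{u_j}$ composed with the smoothing $(1+|\D|)^{-1}$). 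Hence $D_{\psi_j}G_j$ restricted to $H^{1/2,-}(\Sigma M)$ is Fredholm of index zero, and surjectivity will follow once injectivity is established. Injectivity is exactly the statement that $\phi \in H^{1/2,-}(\Sigma M)$ with $P^-(\phi - \rho e^{u_j}\phi) = 0$ forces $\phi = 0$: testing against $\phi$ itself and using that $\Abracket{\D\phi,\phi}_{L^2} \le -\lambda_{-1}\|\phi\|_{L^2}^2 < 0$ while $\rho\int e^{u_j}|\phi|^2 \ge 0$, one gets a sign contradiction unless $\phi=0$. This uses crucially that $\rho > 0$ and that $\Ker\D = \{0\}$, so the negative eigenvalues are bounded away from zero.

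Having surjectivity of $D_{\psi_j}G_j$ onto the target $H^{1/2,-}(\Sigma M)$, the full differential $DG_j$ is a fortiori surjective, and its kernel splits because it is the graph-type complement determined by the partial isomorphism $D_{\psi_j}G_j|_{H^{1/2,-}}$: explicitly, $\Ker DG_j(u_j,\psi_j)$ is a closed complemented subspace of $H^1(M)\times H^{1/2}(\Sigma M)$, with complement $H^{1/2,-}(\Sigma M)$. Applying the implicit function theorem, $G_j^{-1}(0)$ is a smooth submanifold near each of its points, with tangent space $\Ker DG_j$; taking the product over $j=1,2$ gives that $N_\rho \subset H$ is a submanifold, of codimension equal to $2\dim H^{1/2,-}(\Sigma M)$, i.e. infinite codimension, as anticipated in the introduction. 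The main obstacle is the injectivity (equivalently surjectivity) step: one must ensure the perturbation argument genuinely closes, which is where the positivity of $\rho$, the absence of Dirac kernel, and the spectral gap below zero all enter; the rest is routine verification of smoothness and boundedness of the operators involved.
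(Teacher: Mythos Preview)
Your argument is correct and reaches the same conclusion by a slightly different route. The paper proceeds more directly: it simply observes that for $\bm{\phi}\in (H^{1/2,-}(\Sigma M))^2$ the pairing
\begin{align}
\Abracket{\dd G(\bm{u},\bm{\psi})[0,\bm{\phi}],\bm{\phi}}_{(H^{1/2})^2}
= \sum_{j=1,2}\parenthesis{\int_M \Abracket{\D\phi_j,\phi_j}\dv_g - \rho\int_M e^{u_j}|\phi_j|^2\dv_g}
\end{align}
is a bounded, negative-definite bilinear form on $(H^{1/2,-})^2$, and concludes surjectivity of $\dd G$ in one stroke via a Lax--Milgram type argument. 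Your approach instead shows that the restriction of $D_{\psi_j}G_j$ to $H^{1/2,-}$ is Fredholm of index zero (invertible principal part plus compact perturbation) and then deduces surjectivity from injectivity; the injectivity step is exactly the same sign observation that gives the paper its coercivity. Your version is a bit longer but has the merit of making the smoothness of $G$ and the splitting of $\Ker DG$ explicit, which the paper leaves to the reader. One minor slip: the first term in your formula for $D_{\psi_j}G_j[\phi]$ should be $\D\phi$ rather than $\phi$ (the paper's displayed definition of $G_j$ contains the same typo; compare with its own formula for $\dd G_j$ and with the Palais--Smale constraint later). With $\D\phi$ in place, the principal part on $H^{1/2,-}$ is $-|\D|(1+|\D|)^{-1}$, an isomorphism rather than literally the identity, but Fredholmness of index zero and your injectivity argument via $\Abracket{\D\phi,\phi}<0$ go through unchanged.
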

\begin{proof}
 It suffices to show that~$\dd G(\bm{u},\bm{\psi})\colon H\to \parenthesis{H^{1/2,-}(\Sigma M)}^2$ is surjective. 
 Note that 
 \begin{align}
  \dd G(\bm{u},\bm{\psi})[\bm{v},\bm{\phi}]
  =\bigr( \dd G_1(u_1,\psi_1)[v_1, \phi_1],  \dd G_2(u_2,\psi_2)[v_2, \phi_2]\bigr)
 \end{align}
 where
 \begin{align}
  \dd G_j(u_j,\psi_j)[v_j,\phi_j]
  =P^- (1+|\D|)^{-1} \parenthesis{\D\phi_j-\rho e^{u_j}\phi_j - \rho e^{u_j} v_j \psi_j}, \quad j=1,2.
 \end{align}
 Therefore, for any~$\bm{\phi}\in \parenthesis{ H^{\frac{1}{2},-}(\Sigma M) }^{2}$, we have 
 \begin{multline}
  \Abracket{\dd G(\bm{u},\bm{\psi})[0,\bm{\phi}],\bm{\phi}}_{(H^{1/2}(\Sigma M))^2} 
  = \int_M \Abracket{\D\phi_1-\rho e^{u_1}\phi_1,\phi_1} 
     +\Abracket{\D\phi_2-\rho e^{u_2}\phi_2,\phi_2}\dv_g  \\
  = \int_M \Abracket{\D\phi_1,\phi_1}\dv_g 
    -\rho\int_M e^{u_1}|\phi_1|^2\dv_g 
    +\int_M \Abracket{\D\phi_2,\phi_2}\dv_g 
    -\rho\int_M e^{u_2}|\phi_2|^2\dv_g
 \end{multline}
 which defines a nondegenerate quadratic form on the Hilbert space~$\parenthesis{H^{1/2,-}(\Sigma M)}^2$ and this quadratic form has bounded norm as a bilinear form at each~$(\bm{u},\bm{\psi})\in H$. 
 Thus~$\dd G(\bm{u},\bm{\psi})$ is a surjective map. 
\end{proof}

Moreover, it holds
\begin{lemma}
 $N_\rho\subset H$ is a Nehari manifold for~$J_\rho$. 
\end{lemma}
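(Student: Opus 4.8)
The plan is to show that $N_\rho$ is a Nehari manifold for $J_\rho$ in the sense that the constraint $N_\rho$ captures exactly the vanishing of the ``bad'' negative-spinor directions of the gradient, so that critical points of $J_\rho|_{N_\rho}$ are genuine critical points of $J_\rho$ on $H$, and moreover $N_\rho$ is a natural constraint containing the trivial solution $(\bm 0,\bm 0)$. First I would record that $(\bm 0,\bm 0)\in N_\rho$ since $\psi_j=0$ makes $G_j$ vanish, so $N_\rho$ is nonempty, and then observe that a point $(\bm u,\bm\psi)\in H$ lies in $N_\rho$ if and only if $P^-(\psi_j-\rho e^{u_j}\psi_j)=0$ for $j=1,2$ — i.e. $\psi_j^- = \rho\, P^-(e^{u_j}\psi_j)$ — because $(1+|\D|)^{-1}$ is an isomorphism commuting with $P^-$. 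Note $\rho e^{u_j}\psi_j \in L^4\cdot L^4 \subset L^2$, so the quantity makes sense and $G_j$ maps into $H^{1/2,-}(\Sigma M)$ after applying the smoothing $(1+|\D|)^{-1}$; this is exactly the Euler--Lagrange equation for $\psi_j$ projected onto the negative eigenspaces.

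Next I would make precise the defining property of a Nehari manifold here: the partial differential of $J_\rho$ in the $H^{1/2,-}(\Sigma M)$-directions of the spinor variables vanishes on $N_\rho$. Concretely, for $(\bm u,\bm\psi)\in N_\rho$ and any $\bm\eta = (\eta_1,\eta_2)\in (H^{1/2,-}(\Sigma M))^2$, one computes from \eqref{eq:functional} that
\begin{align}
 \mathrm{d}J_\rho(\bm u,\bm\psi)[0,0,\bm\eta]
 = 2\sum_{j=1}^2 \int_M \Abracket{\D\psi_j - \rho e^{u_j}\psi_j,\, \eta_j}\dv_g
 = 2\sum_{j=1}^2 \Abracket{P^-(\psi_j-\rho e^{u_j}\psi_j),\, (\cdots)}
\end{align}
which is zero precisely because on $H^{1/2,-}$ the pairing $\Abracket{\D\psi_j-\rho e^{u_j}\psi_j,\eta_j}_{L^2}$ factors through $P^-(1+|\D|)^{-1}(\psi_j-\rho e^{u_j}\psi_j) = G_j(u_j,\psi_j)=0$. (I would be careful to use that $\eta_j=\eta_j^-$ and that $P^-$ is self-adjoint on $L^2$.) Then the standard Lagrange-multiplier argument, using that $\mathrm dG(\bm u,\bm\psi)$ is surjective by the previous lemma, shows that any critical point of $J_\rho|_{N_\rho}$ is a free critical point of $J_\rho$: the multiplier must vanish since the only directions killed by $\mathrm dG$ complementary to $T N_\rho$ are the $H^{1/2,-}$ spinor directions, on which $\mathrm dJ_\rho$ already vanishes.

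The main obstacle I expect is bookkeeping rather than a deep difficulty: one must verify that the quadratic form appearing in the surjectivity lemma is not only nondegenerate but that its restriction to $H^{1/2,-}(\Sigma M)$ stays \emph{uniformly} negative definite along sequences where $\|e^{u_j}\|_{L^p}$ is controlled — otherwise the Lagrange multiplier identity could degenerate. Here the key input is the Moser--Trudinger compactness recalled in Section~2: $e^{u_j}\in L^p$ for all $p<\infty$ with norm controlled by $\|u_j\|_{H^1}$, so that the perturbation $\rho\int_M e^{u_j}|\eta_j|^2$ is a compact (hence lower-order) perturbation of $\int_M\Abracket{\D\eta_j,\eta_j}$, which is $\le -\lambda_{-1}^{-1}\|\eta_j\|^2 <0$ on $H^{1/2,-}$ by the spectral gap ($\Ker\D=\{0\}$). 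I would also check the elementary but necessary point that $N_\rho$ is invariant under the quaternionic action $\mathbf j\in\mathcal J$ and under the scaling/trivial solution, so that the min-max constructions of Section~4 can be carried out on it. Modulo these verifications, the statement follows from the computation above together with the submanifold property just established.
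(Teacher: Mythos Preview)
Your approach is essentially the same as the paper's: both recognize that the constraint $G_j=0$ is exactly the statement $\int_M\langle\D\psi_j-\rho e^{u_j}\psi_j,\eta_j\rangle=0$ for all $\eta_j\in H^{1/2,-}$, and both kill the Lagrange multiplier by pairing against negative spinor directions and invoking the negative definiteness of $\varphi\mapsto\int_M\langle\D\varphi-\rho e^{u}\varphi,\varphi\rangle$ on $H^{1/2,-}$. The paper simply writes out the constrained Euler--Lagrange system with multipliers $\varphi_j\in H^{1/2,-}$, tests the spinor equation against $\varphi_j$ itself, and uses the constraint to get $0=\int_M\langle\D\varphi_j-\rho e^{u_j}\varphi_j,\varphi_j\rangle$, forcing $\varphi_j=0$.

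Where you overcomplicate things is in your ``main obstacle'' paragraph. The negative definiteness on $H^{1/2,-}$ is \emph{pointwise and elementary}: for $\varphi\in H^{1/2,-}$ one has $\int_M\langle\D\varphi,\varphi\rangle\le -\frac{\lambda_1}{1+\lambda_1}\|\varphi\|_{H^{1/2}}^2$ (using $\Ker\D=\{0\}$) and, crucially, $-\rho\int_M e^{u}|\varphi|^2\le 0$ has the \emph{same sign}. So the two terms add, and there is no perturbation to absorb --- no Moser--Trudinger compactness, no uniformity along sequences, no control of $\|e^{u_j}\|_{L^p}$ is needed at this stage. Your worry that ``the Lagrange multiplier identity could degenerate'' does not arise. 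Likewise, the remarks about quaternionic invariance and scaling are irrelevant to this lemma; they belong (if anywhere) to Section~4. Finally, note the paper's typo in the definition of $G_j$ (it should read $\D\psi_j$ rather than $\psi_j$, as the Palais--Smale proof confirms); you propagated this typo into your reformulation $\psi_j^-=\rho P^-(e^{u_j}\psi_j)$, which is not the correct constraint.
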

\begin{proof}
Consider the constrained functional~$J_\rho|_{N_\rho}$. 
If~$(\bm{u},\bm{\psi})\in N_\rho$ is a constrained critical point for~$J_\rho|_{N_\rho}$, then 
\begin{align}
 \dd J_\rho(\bm{u},\bm{\psi}) = \dd G(\bm{u},\bm{\psi})[0, (\varphi_1, \varphi_2)]
\end{align}
for some~$\varphi_1, \varphi_2 \in H^{1/2,-}(\Sigma M)$, possibly depending on~$(\bm{u},\bm{\psi})$, which are actually the Lagrange multipliers. 
That is, for any~$(\bm{v},\bm{\phi})\in H$, 
\begin{align}
 \dd J_\rho(\bm{u},\bm{\psi})[\bm{v},\bm{\phi}] = \Abracket{\dd G(\bm{u},\bm{\psi})[0, (\varphi_1, \varphi_2)] , (\bm{v},\bm{\phi}) }_H. 
\end{align}
Therefore the constrained Euler--Lagrange equations are
\begin{align}\label{eq:constrained EL}
 \begin{cases}
  \frac{1}{3}\parenthesis{-2\Delta u_1-\Delta u_2}+K_g + 2e^{2u_1} -\rho e^{u_1}|\psi_1|^2 
  = -\rho e^{u_1}\Abracket{\psi_1,\varphi_1}, \\
  \frac{1}{3}\parenthesis{-\Delta u_1- 2\Delta u_2}+ K_g +  2e^{2u_2}-\rho e^{u_2}|\psi_2|^2 
  =-\rho e^{u_2}\Abracket{\psi_2,\varphi_2}, \\
  2(\D\psi_1-\rho e^{u_1}\psi_1) = \D\varphi_1-\rho e^{u_1}\varphi_1, \\
  2(\D\psi_2-\rho e^{u_2}\psi_2) = \D\varphi_2-\rho e^{u_2}\varphi_2. 
 \end{cases}
\end{align}
However, if~\eqref{eq:constrained EL} holds, then we test the spinor equations against~$\varphi_j$ to get 
\begin{align}
 0=2\int_M \Abracket{\D\psi_j-\rho e^{u_j}\psi_j,\varphi_j}\dv_g
 =\int_M \Abracket{\D\varphi_j-\rho e^{u_j}\varphi_j,\varphi_j}\dv_g 
\end{align}
which implies~$\varphi_j=0$ for~$j=1,2$. 
Hence~$(\bm{u},\bm{\psi})$ actually satisfies~\eqref{eq:super Toda-1} and is a free critical point of~$J_\rho$ on~$H$. 
\end{proof}

Thus it suffices to consider the constrained functional~$J_\rho|_{N_\rho}$ and find a nonzero critical point of it. 

\medskip

Next we deal with the main part of this section, which is the key compactness property needed to run the min-max method. It is carried out by making use of test functions and the spectral decomposition.
Here we make essential use of the topological constraint of the genus greater than one and the choice of a spin structure with~$\Ker\D=\{0\}$.  Recall that we have chosen~$K_g\equiv -1$.

\begin{prop}
 $J_\rho|_{N_\rho}$ satisfies the Palais-Smale condition. 
\end{prop}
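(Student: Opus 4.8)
The plan is to take a Palais--Smale sequence $(\bm{u}_n,\bm{\psi}_n)\in N_\rho$ for $J_\rho|_{N_\rho}$ at some level $c$, meaning $J_\rho(\bm{u}_n,\bm{\psi}_n)\to c$ and $\dd(J_\rho|_{N_\rho})(\bm{u}_n,\bm{\psi}_n)\to 0$, and to extract a convergent subsequence in $H$. Since $N_\rho$ is a submanifold whose Nehari character was established above, the constrained condition gives us genuine (unconstrained) approximate critical point estimates: for every $(\bm{v},\bm{\phi})\in H$ we have $\dd J_\rho(\bm{u}_n,\bm{\psi}_n)[\bm{v},\bm{\phi}]=o(1)(\|\bm{v}\|+\|\bm{\phi}\|)+\langle\text{error}\rangle$ where the error comes from the Lagrange multiplier term, and on $N_\rho$ the multipliers are forced to vanish just as in the proof that $N_\rho$ is a Nehari manifold (testing the approximate spinor equation against $\varphi_j$ controls $\|\varphi_j\|$). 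So effectively $(\bm{u}_n,\bm{\psi}_n)$ is an unconstrained Palais--Smale sequence up to controlled errors.

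The core is the a priori bound. First I would control the spinor norms: restricting $(\bm{u}_n,\bm{\psi}_n)$ to $N_\rho$ means $P^-(1+|\D|)^{-1}(\psi_{j,n}-\rho e^{u_{j,n}}\psi_{j,n})=0$, i.e. the negative spectral component of $\psi_{j,n}$ is an explicit compact (Moser--Trudinger) function of $u_{j,n}$ and $\psi_{j,n}$; combined with the almost-critical-point spinor equations $\D\psi_{j,n}=\rho e^{u_{j,n}}\psi_{j,n}+o(1)$ this lets me bound $\|\psi_{j,n}^-\|_{H^{1/2}}$ by $\|e^{u_{j,n}}\psi_{j,n}\|$ and then, via the $H^{1/2}\hookrightarrow L^4$ embedding and H\"older, reduce everything to a bound on $\|\psi_{j,n}^+\|_{H^{1/2}}$ and $\|\bm{u}_n\|_{H^1}$. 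Next, the $u$-equations: the key point is that the Cartan matrix $A=\begin{pmatrix}2&-1\\-1&2\end{pmatrix}$ is positive definite, so the quadratic form $\tfrac13(|\nabla u_1|^2+|\nabla u_2|^2+\langle\nabla u_1,\nabla u_2\rangle)$ is equivalent to $\|\nabla\widehat{u}_1\|_{L^2}^2+\|\nabla\widehat{u}_2\|_{L^2}^2$ (after splitting $u_j=\bar u_j+\widehat u_j$). Testing the approximate first two Euler--Lagrange equations against $\widehat u_{j,n}$ and using the Moser--Trudinger inequality to absorb $\int e^{2u_{j,n}}\widehat u_{j,n}$ and $\rho\int e^{u_{j,n}}|\psi_{j,n}|^2\widehat u_{j,n}$ controls $\|\nabla\widehat{\bm{u}}_n\|_{L^2}$ in terms of the energy level $c$ and the $o(1)$ errors; then one shows the averages $\bar u_{j,n}$ cannot run to $\pm\infty$: $+\infty$ is excluded because $\int e^{2u_{j,n}}$ appears in $J_\rho$ with a good sign, and $-\infty$ is excluded by combining the sign structure of $2K_g(u_1+u_2)=-2(u_1+u_2)$ (here using $K_g\equiv-1$ and $\gamma>1$) with the spinor terms, exactly the balance that the chosen sign convention and genus hypothesis were set up to produce. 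This yields $\|\bm{u}_n\|_{H^1}+\|\bm{\psi}_n\|_{H^{1/2}}\le C$.

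Once the sequence is bounded in $H$, I pass to a weakly convergent subsequence $(\bm{u}_n,\bm{\psi}_n)\rightharpoonup(\bm{u},\bm{\psi})$. Then I upgrade to strong convergence: the maps $u\mapsto e^u$ and $u\mapsto e^{2u}$ are compact from $H^1(M)$ into every $L^p$, and $H^{1/2}(\Sigma M)\hookrightarrow L^4$ compactly, so the nonlinear terms $e^{2u_{j,n}}$, $e^{u_{j,n}}\psi_{j,n}$, $e^{u_{j,n}}|\psi_{j,n}|^2$ converge strongly in the relevant dual spaces. For the spinors, rewriting the approximate equation as $\psi_{j,n}^+=(|\D|)^{-1}P^+(\rho e^{u_{j,n}}\psi_{j,n})+o(1)$ and $\psi_{j,n}^-$ via the $G_j$-constraint shows $\psi_{j,n}$ is, modulo $o(1)$, the image of a strongly convergent sequence under a bounded operator, hence $\psi_{j,n}\to\psi_j$ strongly in $H^{1/2}$. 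For the functions, testing the difference of the $u$-equations at levels $n$ and $m$ against $\widehat u_{j,n}-\widehat u_{j,m}$ and using the coercivity of $A$ on the oscillating part gives that $\widehat{\bm{u}}_n$ is Cauchy in $H^1$, while the averages converge along a subsequence; hence $\bm{u}_n\to\bm{u}$ strongly in $H^1$. Therefore $(\bm{u}_n,\bm{\psi}_n)\to(\bm{u},\bm{\psi})$ in $H$, and since $N_\rho$ is closed the limit lies in $N_\rho$, proving the Palais--Smale condition.

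\textbf{Main obstacle.} The delicate step is the lower a priori bound on the $u$-averages — ruling out $\bar u_{j,n}\to-\infty$. The exponential nonlinearities give no coercivity in that direction, and the spinor coupling $-\rho e^{u_j}|\psi_j|^2$ has an unfavorable sign; one must carefully use the Nehari constraint (which pins $\psi_j^-$ to $e^{u_j}\psi_j$ and so makes $\|\psi_j\|$ degenerate as $u_j\to-\infty$), the curvature term with its chosen sign, and the interplay with $\|\nabla\widehat u_j\|_{L^2}$, to see that the functional value would be forced to $+\infty$, contradicting the Palais--Smale level. This is precisely where the hypotheses $\gamma>1$, $K_g\equiv-1$, and $\Ker\D=\{0\}$ all enter, and where the system case is genuinely harder than the scalar super-Liouville case because the two $u$-equations are coupled through $A$.
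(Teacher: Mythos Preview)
Your overall architecture matches the paper's: kill the Lagrange multipliers by testing the spinor equation against $\varphi_j^n$, obtain an a priori $H$-bound, then upgrade weak to strong convergence via Moser--Trudinger compactness. Two points, however, need correction.

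First, a minor one: the embedding $H^{1/2}(\Sigma M)\hookrightarrow L^4(\Sigma M)$ is \emph{critical} and only continuous, not compact. The paper uses compactness into $L^q$ for $q<4$, which suffices because the nonlinear spinor terms $e^{u_j}\psi_j$ and $e^{u_j}|\psi_j|^2$ land in $H^{-1/2}$ resp.\ $H^{-1}$ via H\"older with the exponential factor in a high $L^p$.

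Second, and more seriously, your route to the gradient bound is circular as written. Testing the $u$-equations against $\widehat u_{j,n}$ produces terms like $\int e^{2u_{j,n}}\widehat u_{j,n}$ and $\int e^{u_{j,n}}|\psi_{j,n}|^2\widehat u_{j,n}$, and Moser--Trudinger only bounds $\|e^{u_{j,n}}\|_{L^p}$ by $\exp(C\|\nabla u_{j,n}\|_{L^2}^2)$, which is exactly the quantity you are trying to control; there is no absorption without an a priori bound on the exponential integrals. The paper avoids this entirely: the gradient bound comes directly from the \emph{level condition} $J_\rho(\bm u^n,\bm\psi^n)\to c$, after first testing the $u$-equations against the constant $1$ and the spinor equations against $\psi_j^n$ to see that the spinor part of $J_\rho$ is $o(\|\bm\psi^n\|_{H^{1/2}})$. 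The level condition then reads
\[
\int \tfrac16(|\nabla u_1^n|^2+|\nabla u_2^n|^2) - 2(u_1^n+u_2^n) + (e^{2u_1^n}+e^{2u_2^n})\,\dv_g \le C + o(\|\bm\psi^n\|),
\]
and the key manoeuvre for the averages is: Jensen gives $e^{2\bar u_1^n}+e^{2\bar u_2^n}\le \fint(e^{2u_1^n}+e^{2u_2^n})$, and convexity gives $2e^{\bar u_1^n+\bar u_2^n}\le e^{2\bar u_1^n}+e^{2\bar u_2^n}$, so one first bounds only the \emph{sum} $|\bar u_1^n+\bar u_2^n|$. This is the genuine system difficulty you flagged. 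The individual upper bounds $\bar u_j^n\le C$ come later, once the spinors are controlled and hence $\int e^{2u_j^n}\le C$; combining ``sum bounded'' with ``each bounded above'' gives each $|\bar u_j^n|\le C$. The spinor bound itself is closed by a $3/4$-power bootstrap: testing the spinor equation against $\psi_j^{n\pm}$ and using $\int e^{2u_j^n},\int e^{u_j^n}|\psi_j^n|^2\le C(1+o(\|\bm\psi^n\|))$ yields $\|\bm\psi^n\|^2\le C(1+\|\bm\psi^n\|)^{3/4}\|\bm\psi^n\|$.
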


\begin{proof}
 Let~$c\in\R$ and~$(\bm{u}^n,\bm{\psi}^n)\in N_\rho$ be a $(PS)_c$ sequence for~$J_\rho|_{N_\rho}$, that is, for some~$\varphi_1^n,\varphi_2^n\in H^{1/2,-}(\Sigma M)$,
 \begin{align}\label{eq:PS:level condition}
  J_\rho(\bm{u}^n, \bm{\psi}^n)\to c, 
 \end{align}
 \begin{align}\label{eq:PS:constraint}
  P^-(1+|\D|)^{-1}(\D\psi_j^n-\rho e^{u_j^n}\psi_j^n)=0, \qquad j=1,2,
 \end{align}
 \begin{align}\label{eq:PS:u1}
  \frac{1}{3}\parenthesis{-2\Delta u_1^n-\Delta u_2^n}+ 2K_g +2 e^{2u_1^n}-\rho e^{u_1^n}|\psi_1^n|^2 + \rho e^{u_1^n}\Abracket{\psi_1^n, \varphi_1^n}\equiv \alpha_1^n \to 0, 
 \end{align}
 \begin{align}\label{eq:PS:u2}
  \frac{1}{3}\parenthesis{-\Delta u_1^n-2\Delta u_2^n}+ 2K_g +2 e^{2u_2^n}-\rho e^{u_2^n}|\psi_2^n|^2 + \rho e^{u_2^n}\Abracket{\psi_2^n, \varphi_2^n}\equiv \alpha_2^n \to 0
 \end{align}
in $ H^{-1}(M)$ and
 \begin{align}\label{eq:PS:psi}
  2\parenthesis{\D\psi_j^n-\rho e^{u_j^n}\psi_j^n }
  -\parenthesis{\D\varphi_j^n-\rho e^{u_j^n}\varphi_j^n}
  \equiv \beta_j^n \to 0 \qquad \mbox{ in } H^{-\frac{1}{2}}(\Sigma M), \quad j=1,2. 
 \end{align}

\

We divide the proof in two steps.

\medskip

\textbf{Step 1.}
First we show that this~$(PS)_c$ sequence~$(\bm{u}^n,\bm{\psi}^n)$ is uniformly bounded in~$H$. 
Compared to the super Liouville case~\cite{jevnikar2020existence}, there are some extra difficulties due to the coupling of the different fields, mainly the entanglement of~$u_1$ and~$u_2$.

\

Testing~\eqref{eq:PS:psi} against~$\varphi^n_j$ and  noting~\eqref{eq:PS:constraint}, we get 
\begin{align}
 -\int_M \Abracket{\D\varphi^n_j, \varphi_j^n} \dv_g +\rho \int_M e^{u^n_j} |\varphi_j^n|^2\dv_g = \parenthesis{\beta_j^n, \varphi_j^n}_{H^{-\frac{1}{2}}\times H^{\frac{1}{2}}}
 = o(\|\varphi_j^n\|_{H^{1/2}}).
\end{align}
Since we have assumed that~$\D$ has no kernel, this implies 
\begin{align}
 \|\varphi_j^n\|_{H^{1/2}}^2 + \int_M \rho e^{u_j^n}|\varphi_j^n|^2 \dv_g = o(\|\varphi_j^n\|_{H^{1/2}}), 
\end{align}
hence also 
\begin{align}
 \|\varphi_j^n\|_{H^{1/2}}=o(1), & & 
  \int_M \rho e^{u_j^n}|\varphi_j^n|^2 \dv_g=o(1). 
\end{align}

Meanwhile test~\eqref{eq:PS:psi} against~$\psi_j^n$, again using~\eqref{eq:PS:constraint}, to get
\begin{align}
 2\int_M \Abracket{\D\psi_j^n-\rho e^{u_j^n}\psi_j^n,\psi_j^n}\dv_g 
 = \parenthesis{\beta_j^n,\psi_j^n}_{H^{-1/2}\times H^{1/2}}
 = o(\|\psi_j^n\|_{H^{1/2}}). 
\end{align}
Then test~\eqref{eq:PS:u1} and~\eqref{eq:PS:u2} against the constant function~$1$:
\begin{align}
 \int_M 2K_g \dv_g 
 +\int_M  2e^{2u_1^n} -\rho e^{u_1^n}|\psi_1^n|^2 +\rho e^{u_1^n}\Abracket{\psi_1^n,\varphi_1^n}\dv_g = \parenthesis{\alpha_1^n,1}_{H^{-1}\times H^1}
 = o(1), 
\end{align}
\begin{align}
 \int_M 2K_g \dv_g 
 +\int_M  2e^{2u_2^n} -\rho e^{u_2^n}|\psi_2^n|^2 +\rho e^{u_2^n}\Abracket{\psi_2^n,\varphi_2^n}\dv_g = \parenthesis{\alpha_2^n,1}_{H^{-1}\times H^1}
 = o(1). 
\end{align}
For each~$j=1,2$, by H\"older inequality we have 
\begin{align}
 \left|\int_M \rho e^{u_j^n}\Abracket{\psi_j^n,\varphi_j^n}\dv_g\right|
 \le \eps \int_M \rho e^{u_j^n}|\psi_j^n|^2 \dv_g +\frac{\rho}{4\eps}\int_M e^{u_j^n}|\varphi_j^n|^2\dv_g. 
\end{align}
It follows that
\begin{multline}\label{eq:test u with const}
 o(1)+8\pi(\gamma-1)+(1-\eps)\int_M \rho e^{u_j^n}|\psi_j^n|^2\dv_g 
 \le \int_M 2 e^{2u_j^n}\dv_g \\
 \le o(1)+8\pi(\gamma-1)+(1+\eps)\int_M \rho e^{u_j^n}|\psi_j^n|^2\dv_g. 
\end{multline}

Now the level condition~\eqref{eq:PS:level condition} implies 
\begin{align}\label{eq:level estimate}
 c+8\pi(\gamma-1)+o(1)\ge \int_M \frac{1}{6}\parenthesis{|\nabla u_1^n|^2+|\nabla u_2^n|^2} \dv_g 
 &+\int_M 2K_g(u_1^n + u_2^n)\dv_g \\
 & +\int_M e^{2u_1^n}+ e^{2u_2^n}\dv_g  
 + o\parenthesis{\|\bm{\psi^n}\|_{H^{1/2}} }. 
\end{align}
Since~$K_g\equiv -1$, 
\begin{align}
 -2\int_M K_g (u_1^n+ u_2^n)\dv_g 
 = 2\int_M (u_1^n+ u_2^n)\dv_g  
 = 2\Vol(M,g) (\bar{u}_1^n+ \bar{u}_2^n)
 = 8\pi(\gamma-1)(\bar{u}_1^n+ \bar{u}_2^n).
\end{align}
Here we have used the Gauss--Bonnet formula
\begin{align}
 4\pi(1-\gamma)=\int_M K_g \dv_g=-\Vol(M,g). 
\end{align}
The Jensen's inequality then implies 
\begin{align}
 e^{2\bar{u}_1^n} + e^{2\bar{u}_2^n} 
 \le \fint_M e^{2u_1^n}+ e^{2u_2^n}\dv_g 
 \le \frac{c}{4\pi(\gamma-1)}+2+2(\bar{u}_1^n+ \bar{u}_2^n)
    +o(1)+ o\parenthesis{\|\bm{\psi^n}\|_{H^{1/2}}}.
\end{align}
By the convexity of the function~$t\mapsto e^t$, we have 
\begin{align}
 2e^{2(\bar{u}_1^n+\bar{u}_2^n)}
 \le e^{2\bar{u}_1^n} + e^{2\bar{u}_2^n} 
 \le \frac{c}{4\pi(\gamma-1)}+2+2(\bar{u}_1^n+ \bar{u}_2^n)
    +o(1)+ o\parenthesis{\|\bm{\psi^n}\|_{H^{1/2}}}.
\end{align}
This implies that 
\begin{align}
 |\bar{u}_1^n + \bar{u}_2^n| \le C(c,M)\parenthesis{1+o(1)+o\parenthesis{\|\bm{\psi^n}\|_{H^{1/2}} } }. 
\end{align}
 Here is a difference with respect to the super Liouville case~\cite{jevnikar2020existence}: we only get a control of~$|\bar{u}_1^n+ \bar{u}_2^n|$ but not each of~$|\bar{u}_j^n|$ for~$j=1,2$. 
The separate~$|\bar{u}_j^n|$ will be controlled in the end. 
Combining with~\eqref{eq:level estimate}, this in turn gives the following estimates:
\begin{align}
 \int_M |\nabla u_1|^2 + |\nabla u_2|^2 \dv_g 
 +\int_M e^{2u^n_1}+ e^{2u^n_2}\dv_g 
 \le C(c,M)\parenthesis{1+ o(1)+ o(\|\bm{\psi}\|_{H^{1/2}})}
\end{align}
and then from~\eqref{eq:test u with const} it follows that 
\begin{align}
 \int_M \rho e^{u_j^n}|\psi_j^n|^2 \dv_g 
 \le \frac{2}{1-\eps} \int_M e^{2u_j^n}\dv_g + C
 \le C(c,M)\parenthesis{1 +o(1)+ o(\|\bm{\psi}\|_{H^{1/2}}) }, \quad j=1,2.
\end{align}

\

Next we estimate the spinors. 
Write~$\psi_j^n=\psi_j^{n+} +\psi_j^{n-}$. 
For~$\psi_j^{n+}$: we test~\eqref{eq:PS:psi} against~$\psi_j^{n+}$ to get
\begin{align}
 C\|\psi_j^{n+}\|^2_{H^{1/2}}
 &\le  \int_M \Abracket{\D\psi_j^n,\psi_j^{n+}}\dv_g \\
 =&\int_M \rho e^{u_j^n}\Abracket{\psi_j^n,\psi_j^{n+} }\dv_g 
  +\int_M \Abracket{\D\varphi_j^n-\rho e^{u_j^n}\varphi_j^n, \psi_j^{n+}}\dv_g 
  +\parenthesis{\beta_j^n,\psi_j^{n+}} \\
 \le& \sqrt{\rho}\parenthesis{\int_M e^{2u_j^n}\dv_g }^{\frac{1}{4}} \parenthesis{\int_M \rho e^{u_j^n}|\psi_j^n|^2\dv_g }^{\frac{1}{2}}
 \parenthesis{\int_M |\psi_j^{n+}|^4 \dv_g }^{\frac{1}{4} } 
 + o\parenthesis{\|\psi_j^{n+}\|_{H^{1/2}}} \\
 \le& \parenthesis{ C(c,\rho,M)( 1+ o(1) + \|\psi_1^n\|_{H^{1/2}} +\|\psi_2^n\|_{H^{1/2}} )}^{\frac{3}{4}} \|\psi_j^{n+}\|_{H^{1/2}}
 + o\parenthesis{\|\psi_j^{n+}\|_{H^{1/2}}}. 
\end{align}
For~$\psi_j^{n-}$, by~\eqref{eq:PS:constraint} we have 
\begin{align}
 C\|\psi_j^{n-}\|^2_{H^{1/2}} 
 \le -\int_M \Abracket{\D\psi_j^n,\psi_j^{n-}}\dv_g 
 & =-\rho\int_M e^{u_j^n}\Abracket{\psi_j^n,\psi_j^{n-}}\dv_g \\ 
 \le& \parenthesis{ C(c,\rho,M)(1+ o(1) + \|\psi_1^n\|_{H^{1/2}} +\|\psi_2^n\|_{H^{1/2}}  ) }^{\frac{3}{4}} \|\psi_j^{n-}\|_{H^{1/2}}
\end{align}
summing these two estimates together we get 
\begin{align}
 C \parenthesis{\|\psi_j^{n+}\|^2_{H^{1/2}}+ \|\psi_j^{n-}\|^2_{H^{1/2}} }
  \le & \parenthesis{ C(c,\rho,M)(1+ o(1) + \|\bm{\psi}^n\|_{H^{1/2}} ) }^{\frac{3}{4}} \parenthesis{ \|\psi_j^{n+}\|_{H^{1/2}} +\|\psi_j^{n-}\|_{H^{1/2}} }  \\
 & + o\parenthesis{\|\psi_j^n\|_{H^{1/2}} }.  
\end{align}
From this it follows that 
\begin{align}
 \|\psi_1^n\|_{H^{1/2}}+\|\psi_2^n\|_{H^{1/2}}
 \le C(c,\rho, M). 
\end{align}
Consequently we also get 
\begin{align}
 \int_M |\nabla u_1^n|^2 +|\nabla u_2^n|^2\dv_g 
 +\int_M e^{2u_1^n} + e^{2u_2^n}\dv_g 
 \le C(c,\rho, M). 
\end{align}
In particular,
\begin{align}
 e^{2\bar{u}_j^n}
 \le \fint_M e^{2u_j^n}\dv_g 
 \le C(c,\rho, M), 
\end{align}
hence 
\begin{align}
 \bar{u}_j^n \le C(c,\rho,M), \qquad j=1,2.  
\end{align}
Recall that we already know~$|\bar{u}_1^n + \bar{u}_2^n|\le C(c,\rho,M)$.
They together implies that 
\begin{align}
 |\bar{u}_1^n| + |\bar{u}_2^n| \le C(c,\rho, M). 
\end{align}
Thus,
\begin{align}
 \|u_j^n\|^2_{H^1} = |\bar{u}_j^n|^2 + \|\nabla u_j^n\|^2_{L^2}
 \le C(c,\rho, M). 
\end{align}
Therefore, the~$(PS)_c$ sequence~$(\bm{u}^n, \bm{\psi}^n)$ is uniformly bounded in~$H$. 

\

\textbf{Step 2.}
We show that there is a subsequence which converges (strongly) in~$N_\rho$ to a constrained critical point for~$J_\rho|_{N_\rho}$. 
 
By the Banach--Alaoglu theorem, we can assume that the sequence~$(\bm{u}^n,\bm{\psi}^n)$ is weakly convergent in~$H$ to a weak limit~$(\bm{u}^\infty, \bm{\psi}^\infty)\in H$. 
By the Moser--Trudinger embedding theorem for the function components and the (fractional) Sobolev embedding theorems for the spinors, we know that 
\begin{align}
 e^{u_j^n} \to e^{u_j^\infty} \qquad \mbox{ strongly in } \quad L^p(M) ,\quad \forall \; p<\infty, 
\end{align}
\begin{align}
 \psi_j^n\to \psi_j^\infty, \qquad \mbox{ strongly in } \quad L^q, \quad \forall 1\le q<4. 
\end{align}
Consequently,~$(\bm{u}^\infty, \bm{\psi}^\infty)$ is a weak solution of~\eqref{eq:super Toda-1}, hence also a smooth solution by the regularity theory. 
In particular,~$(\bm{u}^\infty, \bm{\psi}^\infty)$ lies in~$N_\rho$. 

By using the equations~\eqref{eq:PS:u1},~\eqref{eq:PS:u2} and~\eqref{eq:PS:psi} we readily conclude that~$(\bm{u}^n,\bm{\psi}^n)$ actually converges strongly in~$H$ to the limit~$(\bm{u}^\infty,\bm{\psi}^\infty)$.  
In particular, since~$N_\rho$ is a submanifold of~$H$, the sequence~$(\bm{u}^n,\bm{\psi}^n)$ also converges in~$N_\rho$ to~$(\bm{u}^\infty,\bm{\psi}^\infty)$.

Therefore, the constrained functional~$J_\rho|_{N_\rho}$ satisfies the Palais-Smale condition (C), as desired. 
\end{proof}

It remains to find a nontrivial constrained critical point for~$J_\rho|_{N_\rho}$, which is the subject of next section.

\

\section{Min-max solutions in the Nehari manifold}

The advantage of constraining the functional to the Nehari manifold~$N_\rho$ is that the functional becomes less indefinite, at least locally near the origin~$(0,0)\in N_\rho$, so that the classical min-max strategy could be applied in searching for a nonzero solution. 
This will be done in the spirit of~\cite{jevnikar2020existence, jevnikar2021sinhgordon} highlighting the differences between the super Toda and the super Liouville cases. 

\

Consider the functionals~$F$ and~$Q_\rho$ defined by
\begin{align}
 F(\bm{u})\coloneqq
 & \int_M  \frac{1}{3}\parenthesis{|\nabla u_1|^2 +|\nabla u_2|^2+\Abracket{\nabla u_1, \nabla u_2}} +2 K_g (u_1 + u_2) + (e^{2u_1}+ e^{2u_2})+\int_M 2K_g \dv_g \\
 =&\int_M \frac{1}{3}\parenthesis{|\nabla u_1|^2 +|\nabla u_2|^2+\Abracket{\nabla u_1, \nabla u_2}} 
  +\parenthesis{ e^{2u_1}-1-2u_1 }
  +\parenthesis{ e^{2u_2}-1-2u_2 } \dv_g,
\end{align}
and 
\begin{align}
 Q_\rho(\bm{u}, \bm{\psi})
 \coloneqq & 
 \int_M \Abracket{\D\psi_1- \rho e^{u_1}\psi_1,\psi_1}
          +\Abracket{\D\psi_2-\rho e^{u_2}\psi_2,\psi_2} \dv_g.
\end{align}
Then~$J_\rho= F+ Q_\rho$. 
Note that~$J_\rho(0,0)=0$,~$F(\bm{u})\ge 0$ is convex, while~$Q_\rho(\bm{u}, \bm{\psi})$ is quadratic in~$\bm{\psi}$ but may be negative.

\medskip

In this section we will confine ourselves within the ball
\begin{align}
 N_\rho\cap B_R(0) =\braces{(\bm{u}, \bm{\psi}) \mid \|\bm{u}\|^2 + \|\bm{\psi}\|^2 <R^2}
\end{align}
for some small~$R>0$,
where~$\|\bm{u}\|^2 \equiv \|u_1\|_{H^1}^2 + \| u_2\|_{H^1}^2$ and~$\|\bm{\psi}\|^2 \equiv \|\psi_1\|_{H^{1/2}}^2 +\|\psi_2\|_{H^{1/2}}^2$.  
This~$R$ can be shrank later if needed. 
Note that inside the ball~$\{\|\bm{u}\|^2 \le R^2\}$, and 
\begin{align}
 \| e^{u_j}-1\|_{L^2} \le C(R)\|u_j\|^2_{H^1}, \qquad j=1,2. 
\end{align}

\

We analyze the local behavior of the functional near the origin~$(\bm{0}, \bm{0})\in N_\rho$. 
The constraints $G(\bm{u},\bm{\psi})=0$ implies that, for~$j\in \{1,2\}$, 
\begin{align}
 \int_M \Abracket{\D\psi_j-\rho e^{u_j}\psi_j, \psi_j^-}\dv_g =0. 
\end{align}
Consequently,
\begin{align}
  -\int_M \Abracket{\D\psi_j, \psi_j^-}\dv_g 
  =&  -\int_M \rho e^{u_j}\Abracket{\psi_j^+,\psi_j^-}\dv_g 
      -\int_M \rho e^{u_j}|\psi_j^-|^2 \dv_g \\
  \le & \,\rho \int_M e^{u_j} |\psi_j^+| |\psi_j^-| \dv_g.   
\end{align}
Thus we get 
\begin{align}
 \frac{\lambda_1}{1+\lambda_1}\|\psi_j^-\|_{H^{1/2}}^2 
 \le -\int_M \Abracket{\D\psi_j, \psi_j^-}\dv_g 
 \le \rho \|e^{u_j}\|_{L^2} \|\psi_j^+\|_{H^{1/2}} \|\psi_j^-\|_{H^{1/2}},
\end{align}
that is, 
\begin{align}\label{eq:negative part of spinors}
 \|\psi_j^-\|_{H^{1/2}} \le C(R,\lambda_1)\rho  \|\psi_j^+\|_{H^{1/2}}. 
\end{align}

Consider the functional 
\begin{align}
 J_\rho(\bm{u},\bm{\psi})
 =& F(\bm{u}) + Q_\rho(\bm{u},\bm{\psi}) \\
 =& F(\bm{u}) 
 +\sum_{j=1,2}\int_M \Abracket{(\D-\rho)\psi_j,\psi_j^+}\dv_g 
 +\sum_{j=1,2}\int_M \rho(1-e^{u_j})\Abracket{\psi_j,\psi_j^-}\dv_g \label{eq:splitting of the functional}
\end{align}
The first part,~$F(\bm{u})$, is coercive for~$\|\bm{u}\|\le R$ small: 
\begin{align}
 F(\bm{u})
 \ge& \int_M \frac{1}{6}|\nabla u_1|^2 + \frac{1}{6}|\nabla u_2|^2\dv_g 
    +\int_M (e^{2u_1}-1-2u_1) + (e^{2u_2}-1-2u_2)\dv_g \\
  \ge & \int_M \frac{1}{6}|\nabla u_1|^2 + \frac{1}{6}|\nabla u_2|^2\dv_g + C(|\bar{u}_1|^2 + |\bar{u}_2|^2)\\
  \ge & C\|\bm{u}\|^2, 
\end{align}
where we have used Jensen inequality to get 
\begin{align}
 \int_M e^{2u_j}-1-2u_j \dv_g \ge e^{2\bar{u}_j}-1-2\bar{u}_j
 \ge C\bar{u}_j^2
\end{align}
for~$|\bar{u}_j|<R$ sufficiently small. 
Moreover, the last part of~\eqref{eq:splitting of the functional} is of higher order due to~\eqref{eq:negative part of spinors}:
\begin{align}
 \int_M \rho (1-e^{u_j})\Abracket{\psi_j,\psi_j^+}\dv_g
 \le& C(R)\rho \| u_j\|_{H^1} \|\psi_j\|_{H^{1/2}} \|\psi_j^+\|_{H^{1/2}}. 
\end{align}
To deal with the middle part in~\eqref{eq:splitting of the functional}, we use the decomposition
\begin{align}
 \psi_j=\sum_{k\in \bZ_*} a_{j,k}\Psi_k, 
\end{align}
to see that 
\begin{align}
 \int_M \Abracket{(\D-\rho)\psi_j, \psi_j^+} \dv_g 
 = \sum_{k>0} (\lambda_k-\rho) a_{j,k}^2 
 = -\sum_{0<\lambda_k <\rho} (\rho-\lambda_k) a_{j,k}^2 
   +\sum_{\lambda_k>\rho} (\lambda_k-\rho) a_{j,k}^2
\end{align}
which is of saddle type: it is positive on the space~$H^{1/2,+}_a(\Sigma M)$ and negative on~$H^{1/2,+}_b(\Sigma M)$.  
Since~$H^{1/2,+}_b (\Sigma M)$ has finite dimension, we can apply the min-max strategy to obtain nontrivial solutions of saddle type.  

\medskip

We remark that the Nehari manifold~$N_\rho$ is a path-connected contractible space, which is actually simply connected.
For each fixed~$\bm{u}\in H^1(M)^2$, the space
\begin{align}
 N_{\rho,\bm{u}}
 \equiv \braces{\bm{\psi}\in H^{\frac{1}{2}}(\Sigma M)\mid G(\bm{u}, \bm{\psi})=0}
\end{align}
is a vector space, and the collection of these vector spaces gives a vector bundle over~$H^1(M)^2$, with total space~$N_\rho$. 

\medskip

Given~$\rho\notin\Spect(\D)$, recall the decomposition
\begin{align}
 H^{\frac{1}{2}}(\Sigma M)
 =H^{\frac{1}{2},+}_a(\Sigma M) 
 \oplus H^{\frac{1}{2},+}_b(\Sigma M)
 \oplus H^{\frac{1}{2},-}(\Sigma M), \quad  
 \psi=\psi_a^+ + \psi^+_b + \psi^-, 
\end{align}
and~$\dim H^{\frac{1}{2},+}_b(\Sigma M)= \#\braces{\lambda_k \in (0,\rho)}$. 
If~$\rho<\lambda_1$, then~$H^{\frac{1}{2},+}_b(\Sigma M)=\{0\}$. This is the easy case where the following argument produce a mountain pass critical point.

In general, consider the set 
\begin{align}
 \mathscr{N}_\rho\coloneqq \{\bm{0}\}\times H^{\frac{1}{2},+}_b(\Sigma M)
\end{align}
which is a linear subspace of~$N_\rho$, and~$J_\rho|_{\mathscr{N}_\rho}\le 0$. 
Actually, the tangent space of~$N_\rho$ at the trivial critical point~$(\bm{0},\bm{0})$ is 
\begin{align}
 T_{(\bm{0},\bm{0})} N_\rho 
 = (H^1(M))^2 \times (H^{\frac{1}{2},+}(\Sigma M))^2
\end{align}
and the Hessian~$\Hess(J_\rho)(\bm{0,\bm{0}})$ is a well-defined quadratic form, whose negative subspace is precisely~$\mathscr{N}_\rho$. 
As~$\dim \mathscr{N}_\rho$ is finite and changes when~$\rho$ crosses an eigenvalue of~$\D$, there should be bifurcation solutions near the eigenvalues. 
Here we will get nontrivial solutions for any~$\rho\notin\Spect(\D)$, which is a stronger result than the bifurcation phenomenon. 

\medskip

For a large enough~$\tau>0$ we consider then the cone centered at~$\mathscr{N}_\rho$: 
\begin{align}
 \mathcal{C}_\tau (\mathscr{N}_\rho)\coloneqq 
 \biggr\{(\bm{u},\bm{\psi}) \in N_\rho : 
 \|\bm{u}\|_{H^1}^1 + \|\bm{\psi}^-\|_{H^{1/2}}^2 + \|\bm{\psi}^+_a\|^2_{H^{1/2}} <\tau\|\bm{\psi}^+_b\|^2_{H^{1/2}} \biggr\}.
\end{align}
This cone deformation retracts to the space~$\mathscr{N}_\rho$.
Then it is not hard to prove
\begin{lemma}\label{lemma:mountain level}
 There exist~$\tau>1$,~$R<1$ and~$C>0$ such that
 \begin{align}
  J_\rho(\bm{u},\bm{\psi})
  \ge C\parenthesis{\|\bm{u}\|^2_{H^1} + \|\bm{\psi}\|^2_{H^{1/2}}}, \qquad
  \forall (\bm{u}, \bm{\psi})\in (N_\rho\cap B_R(\bm{0},\bm{0})) \setminus \mathcal{C}_\tau (\mathscr{N}_\rho)
 \end{align}

\end{lemma}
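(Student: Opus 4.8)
The plan is to show that outside the cone $\mathcal{C}_\tau(\mathscr{N}_\rho)$, the indefinite middle term of~\eqref{eq:splitting of the functional} is dominated by the positive contributions, so that $J_\rho$ is coercive there. Recall from the splitting
\begin{align}
 J_\rho(\bm{u},\bm{\psi})
 = F(\bm{u})
 +\sum_{j=1,2}\int_M \Abracket{(\D-\rho)\psi_j,\psi_j^+}\dv_g
 +\sum_{j=1,2}\int_M \rho(1-e^{u_j})\Abracket{\psi_j,\psi_j^-}\dv_g,
\end{align}
together with the three facts already established: $F(\bm{u})\ge C\|\bm{u}\|^2$ for $\|\bm{u}\|\le R$; the negative-spinor components are higher order, $\|\psi_j^-\|_{H^{1/2}}\le C(R,\lambda_1)\rho\|\psi_j^+\|_{H^{1/2}}$; and the last sum is cubic, bounded by $C(R)\rho\|u_j\|_{H^1}\|\psi_j\|_{H^{1/2}}\|\psi_j^+\|_{H^{1/2}}$. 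The middle term equals $\sum_{j}\bigl(-\sum_{0<\lambda_k<\rho}(\rho-\lambda_k)a_{j,k}^2 + \sum_{\lambda_k>\rho}(\lambda_k-\rho)a_{j,k}^2\bigr)$, so writing $\mu\coloneqq\min_{\lambda_k>\rho}(\lambda_k-\rho)>0$ (recall $\rho\notin\Spect(\D)$) and $M_0\coloneqq\max_{0<\lambda_k<\rho}(\rho-\lambda_k)$, we have the lower bound $\mu\|\bm{\psi}^+_a\|_{H^{1/2}}^2 - M_0\|\bm{\psi}^+_b\|_{H^{1/2}}^2$ for that term; note this is where finite-dimensionality of $H^{1/2,+}_b$ enters, giving a genuine constant $M_0$.

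The key step is the definition of the cone complement: being outside $\mathcal{C}_\tau(\mathscr{N}_\rho)$ means $\|\bm{u}\|_{H^1}^2 + \|\bm{\psi}^-\|_{H^{1/2}}^2 + \|\bm{\psi}^+_a\|_{H^{1/2}}^2 \ge \tau\|\bm{\psi}^+_b\|_{H^{1/2}}^2$. First I would use~\eqref{eq:negative part of spinors} to absorb $\|\bm{\psi}^-\|^2$ into a small multiple of $\|\bm{\psi}^+\|^2 = \|\bm{\psi}^+_a\|^2 + \|\bm{\psi}^+_b\|^2$; combined with the cone condition this yields, for $R$ small enough, an inequality of the shape $\|\bm{u}\|^2 + \|\bm{\psi}^+_a\|^2 \ge c\tau\|\bm{\psi}^+_b\|^2$ (after moving the small $\|\bm{\psi}^+_b\|^2$ contribution coming from $\|\bm{\psi}^-\|^2$ to the left, using $\tau$ large). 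Then in the lower bound $J_\rho \ge C\|\bm{u}\|^2 + \mu\|\bm{\psi}^+_a\|^2 - M_0\|\bm{\psi}^+_b\|^2 - (\text{cubic error})$, I replace $M_0\|\bm{\psi}^+_b\|^2 \le \frac{M_0}{c\tau}\bigl(\|\bm{u}\|^2 + \|\bm{\psi}^+_a\|^2\bigr)$ and choose $\tau$ so large that $\frac{M_0}{c\tau} < \frac12\min(C,\mu)$; this leaves $J_\rho \ge \frac12\min(C,\mu)\bigl(\|\bm{u}\|^2 + \|\bm{\psi}^+_a\|^2\bigr) - (\text{cubic error})$.

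Finally I would absorb the cubic error term: by Young's inequality $C(R)\rho\|u_j\|_{H^1}\|\psi_j\|_{H^{1/2}}\|\psi_j^+\|_{H^{1/2}} \le C(R)\rho\bigl(\|\bm{u}\|^2 + \|\bm{u}\| \|\bm{\psi}\|^2\bigr)$, and since $\|\bm{u}\|\le R$ and $\|\bm{\psi}^-\|^2$ is controlled by $\|\bm{\psi}^+\|^2$, the whole error is bounded by $C(R)\rho\, R\,(\|\bm{u}\|^2 + \|\bm{\psi}^+\|^2)$; shrinking $R$ so that $C(R)\rho R < \frac14\min(C,\mu)$ leaves a clean lower bound $J_\rho \ge \frac14\min(C,\mu)\bigl(\|\bm{u}\|^2 + \|\bm{\psi}^+_a\|^2 + \|\bm{\psi}^+_b\|^2\bigr) \ge C'\bigl(\|\bm{u}\|^2 + \|\bm{\psi}\|^2\bigr)$, using once more that $\|\bm{\psi}^-\|^2$ is dominated by $\|\bm{\psi}^+\|^2$, which is exactly the claimed estimate. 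The main obstacle is the bookkeeping in choosing $\tau$ and $R$ in the right order: $\tau$ must be fixed first (depending only on $C$, $\mu$, $M_0$, $\lambda_1$, $\rho$) to beat the $M_0$ term, and only afterward can $R$ be shrunk to kill the cubic remainder, since the constant $C(R)$ in the cubic bound must not be allowed to blow up faster than $R\to 0$ makes $R$ shrink — here one uses that $C(R)$ stays bounded as $R\to 0$.
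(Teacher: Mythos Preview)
Your argument is correct and is precisely the computation the paper omits (it refers to the super Liouville case in~\cite{jevnikar2020existence}): you exploit the decomposition~\eqref{eq:splitting of the functional}, the coercivity of~$F$, the saddle structure of the middle term, and the cone-complement condition to absorb~$\|\bm{\psi}^+_b\|^2$, then shrink~$R$ to kill the cubic remainder. One small correction: the constant giving $\sum_{\lambda_k>\rho}(\lambda_k-\rho)a_{j,k}^2\ge\mu\,\|\psi_{j,a}^+\|_{H^{1/2}}^2$ should be $\mu'=\min_{\lambda_k>\rho}\frac{\lambda_k-\rho}{1+\lambda_k}$ rather than $\min_{\lambda_k>\rho}(\lambda_k-\rho)$, since the~$H^{1/2}$-norm carries the weights~$(1+\lambda_k)$; this~$\mu'$ is still positive because~$\rho\notin\Spect(\D)$, and the rest of your argument goes through unchanged.
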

The computations are similar to the super Liouville case~\cite{jevnikar2020existence}, so we omit the details here. With the above choice, consider the set 
\begin{align}
 L_1\coloneqq 
 (N_\rho\cap \p B_R(\bm{0},\bm{0})) \setminus \mathcal{C}_\tau (\mathscr{N}_\rho) 
\end{align}
on which the functional is bounded from below by a positive constant. It is nonempty, and is homeomorphic to the collar around the set~$\p B_R(\bm{0},\bm{0})\cap (H^1(M))^2\times (H^{\frac{1}{2},+}_1(\Sigma M))^2$. 

\medskip

To apply the classical linking theorem (see e.g.~\cite{ambrosetti1995primer, struwe2008variational}), it remains to construct another subset~$L_2$ which links~$L_1$ such that~$J_\rho|_{L_2}\le 0$. 
The construction of~$L_2$ is the same as that in~\cite{jevnikar2020existence}, except that we have to extend the components of functions and spinors. 
The requirement that~$J_\rho|_{L_2}\le 0$ remains true since the functions in the set~$L_2$ are constants, along which the coupling feature for the two function components of the super Toda system is not present in the functional~$J_\rho$.
The set~$L_2$ is actually made of the boundary of a cylinder segment~$\mathcal{D}\subset N_\rho$, i.e.~$L_2=\p \mathcal{D}$, which, passing through the origin, winds around the set~$L_1$, see Figure 1 in~\cite{jevnikar2020existence}.
The set~$\mathcal{D}$ is compact.
Let
\begin{align}
 \Gamma\coloneqq \braces{\alpha\colon\mathcal{D}\to N_\rho \ : \ \alpha|_{\p\mathcal{D}}= \id_{\mathcal{\p\mathcal{D}}}}. 
\end{align}
This is nonempty since~$\id_{\mathcal{D}}\in\Gamma$.
Then it is standard to define the linking level 
\begin{align}
 c_1\coloneqq \inf_{\alpha\in\Gamma} \max_{x\in\mathcal{D}} J_\rho(\alpha(x))
\end{align}
which is a critical level, namely there exists a critical point~$(\bm{u}_*, \bm{\psi}_*)$ with~$J_\rho(\bm{u}_*, \bm{\psi}_*)=c_1$. 
From Lemma~\ref{lemma:mountain level} we see that~$c_1\ge CR^2>0$, hence the critical point~$(\bm{u}_*, \bm{\psi}_*)$ above is not the trivial one. 
This holds for any~$\rho\notin\Spect(\D)$, thus proves Theorem~\ref{thm:existence}. 

Note that if~$\rho<\lambda_1$, what we obtained is actually a solution of mountain pass type. 



\

\bibliographystyle{siam}
\bibliography{Super-Toda}

\begin{thebibliography}{10}

\bibitem{ambrosetti1995primer}
{\sc A.~Ambrosetti and G.~Prodi}, {\em A primer of nonlinear analysis}, vol.~34
  of Cambridge Studies in Advanced Mathematics, Cambridge University Press,
  Cambridge, 1995.
\newblock Corrected reprint of the 1993 original.

\bibitem{ammann2003habil}
{\sc B.~Ammann}, {\em A variational problem in conformal spin geometry},
  Habilitationsschift, Universit\"{a}t Hamburg, 2003.

\bibitem{aubin1998some}
{\sc T.~Aubin}, {\em Some nonlinear problems in {R}iemannian geometry},
  Springer Monographs in Mathematics, Springer-Verlag, Berlin, 1998.

\bibitem{bar1992harmonic}
{\sc C.~B\"{a}r and P.~Schmutz}, {\em Harmonic spinors on {R}iemann surfaces},
  Ann. Global Anal. Geom., 10 (1992), pp.~263--273.

\bibitem{bures1994harmonic}
{\sc J.~Bure\v{s}}, {\em Harmonic spinors on {R}iemann surfaces}, no.~37, 1994,
  pp.~15--32.
\newblock Geometry and physics (Zd\'{\i}kov, 1993).

\bibitem{alfaro2010multi}
{\sc V.~de~Alfaro and A.~T. Filippov}, {\em Multi-exponential models of
  {$(1+1)$}-dimensional dilaton gravity and {T}oda-{L}iouville integrable
  models}, Teoret. Mat. Fiz., 162 (2010), pp.~41--68.

\bibitem{efimov1968hyperbolic}
{\sc N.~V. Efimov}, {\em Hyperbolic problems in the theory of surfaces}, in
  Proc. {I}nternat. {C}ongr. {M}ath. ({M}oscow, 1966), Izdat. ``Mir'', Moscow,
  1968, pp.~177--188.

\bibitem{ginoux2009dirac}
{\sc N.~Ginoux}, {\em The {D}irac spectrum}, vol.~1976 of Lecture Notes in
  Mathematics, Springer-Verlag, Berlin, 2009.

\bibitem{jevnikar2020existence}
{\sc A.~Jevnikar, A.~Malchiodi, and R.~Wu}, {\em Existence results for a
  super-{L}iouville equation on compact surfaces}, Trans. Amer. Math. Soc., 373
  (2020), pp.~8837--8859.

\bibitem{jevnikar2021existence}
\leavevmode\vrule height 2pt depth -1.6pt width 23pt, {\em Existence results
  for super-{L}iouville equations on the sphere via bifurcation theory}, J.
  Math. Study, 54 (2021), pp.~89--122.

\bibitem{jevnikar2021sinhgordon}
\leavevmode\vrule height 2pt depth -1.6pt width 23pt, {\em Min-max solutions
  for super sinh-{G}ordon equations on compact surfaces}, J. Differential
  Equations, 289 (2021), pp.~128--158.

\bibitem{jost2011riemannian}
{\sc J.~Jost}, {\em Riemannian geometry and geometric analysis}, Universitext,
  Springer, Heidelberg, sixth~ed., 2011.

\bibitem{jost2018symmetries}
{\sc J.~Jost, E.~Ke\ss~ler, J.~Tolksdorf, R.~Wu, and M.~Zhu}, {\em Symmetries
  and conservation laws of a nonlinear sigma model with gravitino}, J. Geom.
  Phys., 128 (2018), pp.~185--198.

\bibitem{jost2019superToda}
{\sc J.~Jost, C.~Zhou, and M.~Zhu}, {\em The super-{T}oda system and bubbling
  of spinors}, J. Funct. Anal., 276 (2019), pp.~410--446.

\bibitem{lawson1989spin}
{\sc H.~B. Lawson and M.-L. Michelsohn}, {\em Spin Geometry}, vol.~38 of
  Princeton Mathematical Series, Princeton University Press, New Jersy, 1989.

\bibitem{maalaoui2019characterization}
{\sc A.~Maalaoui and V.~Martino}, {\em Characterization of the {P}alais-{S}male
  sequences for the conformal {D}irac-{E}instein problem and applications}, J.
  Differential Equations, 266 (2019), pp.~2493--2541.

\bibitem{malchiodi2007some}
{\sc A.~Malchiodi and C.~B. Ndiaye}, {\em Some existence results for the {T}oda
  system on closed surfaces}, Atti Accad. Naz. Lincei Rend. Lincei Mat. Appl.,
  18 (2007), pp.~391--412.

\bibitem{milnor1972efimov}
{\sc T.~K. Milnor}, {\em Efimov's theorem about complete immersed surfaces of
  negative curvature}, Advances in Math., 8 (1972), pp.~474--543.

\bibitem{olshanetsky1983susy}
{\sc M.~A. Olshanetsky}, {\em Supersymmetric two-dimensional {T}oda lattice},
  Comm. Math. Phys., 88 (1983), pp.~63--76.

\bibitem{struwe2008variational}
{\sc M.~Struwe}, {\em Variational methods}, vol.~34 of Ergebnisse der
  Mathematik und ihrer Grenzgebiete. 3. Folge. A Series of Modern Surveys in
  Mathematics [Results in Mathematics and Related Areas. 3rd Series. A Series
  of Modern Surveys in Mathematics], Springer-Verlag, Berlin, fourth~ed., 2008.
\newblock Applications to nonlinear partial differential equations and
  Hamiltonian systems.

\bibitem{szulkin2009ground}
{\sc A.~Szulkin and T.~Weth}, {\em Ground state solutions for some indefinite
  variational problems}, J. Funct. Anal., 257 (2009), pp.~3802--3822.

\bibitem{szulkin2010themethod}
\leavevmode\vrule height 2pt depth -1.6pt width 23pt, {\em The method of
  {N}ehari manifold}, in Handbook of nonconvex analysis and applications, Int.
  Press, Somerville, MA, 2010, pp.~597--632.

\end{thebibliography}

\end{document}